\documentclass[12pt,a4paper,twoside]{article}

\usepackage{amsmath,amssymb,amsthm,amsfonts}
\usepackage{mathtools}
\usepackage{latexsym}
\usepackage{mathrsfs}
\usepackage{extarrows} 
\usepackage{enumerate} 
\usepackage{enumitem} 
\usepackage{bm} 
\usepackage{arydshln}

\usepackage[dvipdfmx,hypertexnames=false,setpagesize=false]{hyperref}
\usepackage{cleveref}
\usepackage{cite}

\usepackage{graphicx,color} 
\usepackage[svgnames]{xcolor} 
\usepackage[normalem]{ulem} 
\usepackage{color}

\usepackage{mathrsfs} 
\usepackage[dvips,all]{xy}  
\usepackage{array, booktabs} 
\usepackage{colortbl} 
\usepackage{tabularx} 
\usepackage{float} 
\usepackage{subcaption} 
\usepackage{longtable}
\usepackage[margin=25truemm]{geometry}

\makeatletter
\renewcommand{\theenumi}{{\upshape (\@roman\c@enumi)}}

\renewcommand{\p@enumii}{}
\renewcommand{\theenumii}{{\upshape (\@alph\c@enumii)}}

\makeatother

\crefname{equation}{式}{式}
\crefname{figure}{図}{図}
\crefname{table}{表}{表}
\crefname{algorithm}{Algorithm}{Algorithm}

\theoremstyle{definition}
\newtheorem{theo}{Theorem}[section]
\newtheorem{dfn}[theo]{Definition}
\newtheorem{prop}[theo]{Proposition}
\newtheorem{lem}[theo]{Lemma}
\newtheorem{cor}[theo]{Corollary}
\newtheorem{rem}[theo]{Remark}
\newtheorem{ex}{Example}

\title{Multiple modular $L$-functions and modular iterated integrals}
\author{Mahiro Yokomizo}
\date{\today}

\begin{document}
\maketitle
\begin{abstract}
The connection between multiple modular $L$-functions, as defined by Manin in \cite{M}, and modular iterated integrals was made explicit by Choie and Ihara \cite{i} under the restrictive assumption that all modular forms involved have vanishing constant terms in their $q$-expansions. In this paper, we remove the assumption and establish the relationship between modular iterated integrals and multiple modular $L$-functions for general modular forms, including those with nonzero constant terms. We also provide a proof of a functional equation for modular iterated integrals, which is a specialization of a general result obtained by Brown \cite{IB}. This leads us to a generalization of the result of Choie--Ihara\cite{i}. In the final part of the paper, we compute explicit examples of modular iterated integrals. These calculations essentially reproduce the explicit initial computations carried out by Brown\cite{IB}, but they also serve to validate the broader framework developed in this work.
\end{abstract}

\section{Introduction}

Modular forms and their associated $L$-functions play an important role in number theory and related fields. These $L$-functions possess rich arithmetic properties and are deeply connected to periods, special values, and functional equations. In an effort to generalize classical modular $L$-functions, Manin \cite{M} introduced the concept of \emph{modular iterated integrals} and defined associated \emph{multiple modular $L$-functions}, extending ideas related to multiple zeta values.

Choie and Ihara \cite{i} established an explicit correspondence between multiple modular $L$-functions and modular iterated integrals, based on Manin's definitions. However, their results relied on the assumption that all modular forms involved have vanishing constant terms in their $q$-expansions—that is, they are cusp forms. This restriction excludes many important modular forms such as Eisenstein series, thereby limiting the generality of their framework.

The aim of this paper is to remove the assumption and extend the theory to general modular forms, including those with nonzero constant terms. To achieve this, we develop methods to systematically isolate and manage the contribution of constant terms. This allows us to generalize the explicit relationships given by Choie--Ihara, and also to revisit and prove functional equations in the context of modular iterated integrals, inspired by earlier results of Brown \cite{IB}.

Our main results (\Cref{thI}, \Cref{thS}) can be roughly stated as follows:

\begin{theo}[Main Theorem (Rough statement)]
For positive integers $n$ , let $f_1, \ldots, f_n$ be modular forms of level $N$, not necessarily cusp forms. Then the modular iterated integral of the form
 $I_{i\infty}^0\binom{f_1,\dots,f_n}{s,\alpha_2,\dots,\alpha_n}$ 
with $s \in \mathbb{C}$ and $\alpha_2, \ldots, \alpha_n \in \mathbb{Z}_{>0}$, can be expressed explicitly as a linear combination of multiple modular $L$-functions,
and conversely, each such multiple modular $L$-function  $L\binom{f_1, \ldots, f_n;}{ s, \alpha_2, \ldots, \alpha_n}$  can be written as a linear combination of modular iterated integrals of this form.
\end{theo}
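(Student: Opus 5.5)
Set $f_j(\tau)=a_j(0)+f_j^0(\tau)$, where $f_j^0(\tau)=\sum_{m\ge1}a_j(m)q^{m/N}$ vanishes at $i\infty$. I will use Manin's definitions. The modular iterated integral is
\[
I_{i\infty}^0\binom{f_1,\dots,f_n}{s_1,\dots,s_n}=\int_{i\infty>\tau_1>\dots>\tau_n>0}\prod_j f_j(\tau_j)\tau_j^{s_j-1}\,d\tau_j ,
\]
taken along the imaginary axis and regularized at both endpoints. The multiple modular $L$-function $L\binom{f_1,\dots,f_n;}{s_1,\dots,s_n}$ is the Dirichlet-type series obtained, in the cusp-form case, by expanding every $f_j^0$ and integrating term by term; Choie--Ihara proved that in that case the two objects agree up to explicit Gamma and power-of-$2\pi$ factors.

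My plan is to expand the product $\prod_j(a_j(0)+f_j^0(\tau_j))$ into $2^n$ terms, one for each subset $S\subset\{1,\dots,n\}$ of positions where the constant term is taken. In each term, the variables $\tau_j$ with $j\in S$ carry only the monomial $\tau_j^{\alpha_j-1}$. Because $\alpha_2,\dots,\alpha_n$ are positive integers, I can integrate those variables explicitly between their neighbours in the chain $\tau_{j-1}>\tau_j>\tau_{j+1}$.

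A block of consecutive constant positions therefore collapses into a polynomial in the two neighbouring variables. Expanding that polynomial binomially redistributes integer powers onto the adjacent non-constant variables. The result is that each term becomes a finite $\mathbb{Q}$-linear combination of products of $a_j(0)$'s with iterated integrals of the cusp parts $f_j^0$, with shifted exponents $s$ or $\alpha_j+k$.

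Boundary blocks need separate care. A block at the end next to $0$ integrates to a pure power with no divergence. A block at the top next to $i\infty$ is divergent and has to be handled by the regularization. The $s$-slot may also be a constant position; there I will use analytic continuation in $s$, defining $\int_{\tau_2}^{i\infty}\tau^{s-1}d\tau$ as $-\tau_2^s/s$ in the regularized sense, which is consistent with Manin's regularization.

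Each resulting iterated integral of cusp-part functions is then a Choie--Ihara-type integral. I will evaluate it by the known term-by-term computation: expand the $q$-series and integrate the innermost variables first, which produces the nested Dirichlet sums that define $L$. At this point I need $L$ for non-cusp $f_j$ to be defined via the $f_j^0$ together with the constant-term corrections exactly as in the paper's definition. If the paper instead defines $L$ directly via Mellin transforms of the $f_j-a_j(0)$, the same bookkeeping still applies. This gives the forward direction,
\[
I_{i\infty}^0\binom{f_1,\dots,f_n}{s,\alpha_2,\dots,\alpha_n}=\sum c\cdot\prod a_j(0)\cdot(\text{Gamma factors})\cdot L\binom{f^0_{j_1},\dots;}{s',\beta,\dots},
\]
where the sum runs over subsets and binomial terms.

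For the converse I will argue by induction on $n$ (the depth). The term with $S=\emptyset$ is exactly the top-depth $L$-value times an invertible Gamma/$(2\pi)$ factor. Every other term involves strictly fewer functions, so it has lower depth. Those lower-depth $L$-values are, by the induction hypothesis, linear combinations of iterated integrals with fewer entries; the $f_j^0$ appearing there can be rewritten as $f_j-a_j(0)$, which again reduces to iterated integrals of the $f_j$. The transition matrix is therefore unitriangular in depth, and inverting it gives the converse statement.

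I expect the main obstacle to be the regularization at $i\infty$ when the outer functions have constant terms, including the case where the $s$-slot is constant. There the naive integrals diverge, and the shuffle-regularized value must match the analytic continuation in $s$. I will first try to verify this for $n=1$, where everything reduces to the classical $\Lambda(f,s)$ with its $a(0)/s$ pole terms. For general $n$ I will use the factorization $\int_{\tau_2}^{i\infty}=\int_{\tau_2}^{i}+\int_{i}^{i\infty}$, splitting the outer variable at $i$ to separate the convergent and divergent pieces.
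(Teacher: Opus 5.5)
Your integral $\int_{i\infty>\tau_1>\dots>\tau_n>0}$ reverses the paper's ordering. In $I_{i\infty}^0\binom{f_1,\dots,f_n}{s_1,\dots,s_n}$ each inner integral runs from $i\infty$ to the previous variable. So $z_1$, which carries $s$, is outermost and closest to $0$, and $z_n$ is closest to $i\infty$. What you wrote is, up to sign, the paper's $I_{i\infty}^0\binom{f_n,\dots,f_1}{\alpha_n,\dots,\alpha_2,s}$.

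This is not cosmetic. With $s$ at the $i\infty$ end, term-by-term integration does not produce the nested sums defining $L$. Integrating your inner variables from $0$ leaves constants $\int_0^{i\infty}$, i.e.\ products of lower-depth $L$-values. Integrating the $s$-slot from $i\infty$ gives incomplete Gamma functions rather than polynomial multiples of $q^m$.

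In the correct ordering your boundary analysis is transposed:
\begin{itemize}
\item The divergent block at $i\infty$ consists of integer slots $z_m,\dots,z_n$. It is evaluated by analytic continuation in the exponents via Lemma~\ref{lem1}, exactly as in Proposition~\ref{prop1}, so no separate shuffle regularization is needed.
\item A constant block at $0$ necessarily contains the $s$-slot. For $\mathrm{Re}\,s$ large it integrates by Fubini to a power of the next cusp variable times $1/(s(s+\alpha_2)\cdots)$.
\end{itemize}
With this correction your forward argument is sound, and it is a genuinely different route from the paper's. You eliminate every constant slot (interior ones by Fubini and binomial redistribution) and then quote the cusp case. The paper instead keeps interior and leading constant slots as $1$'s and absorbs them through the difference kernels $(z_j-z_{j-1})^{\alpha_j-1}$ of $\tilde I$ and $\tilde F$ (Propositions~\ref{hurie} and~\ref{prop3}); this yields its closed formula in one pass.

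The converse has a genuine gap. The $S=\emptyset$ term is not a single depth-$n$ $L$-value times a Gamma factor. The cusp case (the corollary of Theorem~\ref{thI}) is $\Gamma^{(s,\alpha_2,\dots,\alpha_n)}\sum_{\mathbb{J}}(\text{binomials})\,L\binom{f_1,\dots,f_n}{s+j_2,\alpha_2-j_2+j_3,\dots,\alpha_n-j_n}$. This mixes several depth-$n$ values as soon as some $\alpha_r>1$: already $I_{i\infty}^0\binom{f_1,f_2}{s,2}$ contains both $\Gamma(s+1)L\binom{f_1,f_2}{s+1,1}$ and $\Gamma(s)L\binom{f_1,f_2}{s,2}$. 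Hence the transition matrix is not unitriangular in depth alone. You need one of two additional ingredients:
\begin{itemize}
\item A second induction inside depth $n$. The off-diagonal terms have tail sums $\alpha_r+\dots+\alpha_n-j_r$ that are all $\le$, and at least one $<$, those of $(\alpha_2,\dots,\alpha_n)$. The diagonal coefficient $\Gamma^{(s,\alpha_2,\dots,\alpha_n)}$ never vanishes, so the system inverts.
\item The paper's explicit inversion. By Proposition~\ref{prop3}, $\Gamma^{(s,\alpha_2,\dots,\alpha_n)}L\binom{f_1,\dots,f_n}{s,\alpha_2,\dots,\alpha_n}=\int_{i\infty}^0\tilde F_{i\infty}^{z}\binom{f_1^0,\dots,f_n^0}{-,\alpha_2,\dots,\alpha_n}z^{s-1}dz$. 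Part (2) of the proposition relating $F$ and $\tilde F$ writes this $\tilde F$ as an alternating binomial combination of $z^{j_2}F$. So $L$ becomes directly a combination of the integrals $I_{i\infty}^0\binom{f_1^0,\dots,f_n^0}{s+j_2,\alpha_2-j_2+j_3,\dots}$.
\end{itemize}
Only after this step does your depth induction via $f_j^0=f_j-a_0^{(j)}$, which is the paper's, close the argument.
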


In the final section of this paper, we compute explicit examples of modular iterated integrals on modular curves to demonstrate the effectiveness and consistency of the general framework. These explicit examples extend and confirm prior computations carried out by Brown \cite{IB}.

This work clarifies and expands the relationship between modular forms, iterated integrals, and multiple $L$-functions, contributing to a deeper understanding of their analytic structure and potential arithmetic applications.

\section*{Acknowledgements}

The author would like to express his sincere gratitude to Prof.~Yasuo Ohno for his insightful suggestions and constant encouragement.
The author is also grateful to Prof.\ Takuya Yamauchi for his helpful comments and valuable advice, particularly for his insightful guidance on modular forms for $\Gamma_0(4)$.

\section{Iterated Integrals of 1-Forms}
In this section, we define iterated integrals and describe some of their basic properties.

\begin{dfn}
Let $M$ be a differentiable manifold, let $\eta_1, \dots, \eta_k$ be $1$-forms on $M$, and let $\gamma: [0,1] \to M$ be a piecewise smooth path. Then the multiple integral
\begin{align*}
\int_\gamma \eta_1 \dots \eta_k 
&:= \int_{0 < t_1 < \dots < t_k < 1} \gamma^* \eta_1(t_1) \dots \gamma^* \eta_k(t_k) \\
&= \int_0^1 \gamma^* \eta_k(t_k) \int_0^{t_k} \gamma^* \eta_{k-1}(t_{k-1}) \dots \int_0^{t_2} \gamma^* \eta_1(t_1)
\end{align*}
is called the \emph{iterated integral} of the $1$-forms $\eta_1, \dots, \eta_k$ along $\gamma$. Here, $\gamma^* \eta_j$ denotes the pullback of $\eta_j$ by $\gamma$.
\end{dfn}

\begin{ex}\label{mzv}
For any integers $k_1, \dots, k_{d-1} \in \mathbb{Z}_{>0}$ and $k_d \in \mathbb{Z}_{>1}$, we have:
\begin{align*}
\zeta(k_1, \dots, k_d)
= \int_{\mathrm{dch}} \omega_1 \omega_0^{k_1 - 1} \dots \omega_1 \omega_0^{k_{d-1}},
\end{align*}
where the path $\mathrm{dch}: [0,1] \to \mathbb{C}$ is defined by $\mathrm{dch}(t) := t$, and the $1$-forms are given by $\omega_0 := \frac{dz}{z}$ and $\omega_1 := \frac{dz}{1 - z}$. Here, $M = \mathbb{C}$ and $z$ is a complex variable with $dz = dx + i\,dy$.

The quantity $\zeta(k_1, \dots, k_d)$ is the \emph{multiple zeta value}, a real number defined by
\begin{align*}
\zeta(k_1, \dots, k_d)
:= \sum_{0 < n_1 < \dots < n_d} \frac{1}{n_1^{k_1} \dots n_d^{k_d}}.
\end{align*}
\end{ex}

Iterated integrals satisfy the following fundamental properties.

\begin{prop}[]\label{int}
For positive integer $k$ and $l$, let $\eta_1, \dots, \eta_k, \eta_{k+1}, \dots, \eta_{k+l}$ be regular differential $1$-forms on a differentiable manifold $M$, and let $\gamma, \gamma_1, \gamma_2: [0,1] \to M$ be piecewise smooth paths, chosen so that all the integrals below converge. Then the following properties hold:
\begin{enumerate}
\item The value $\int_\gamma \eta_1 \dots \eta_k$ is independent of the parametrization of $\gamma$.
\item If $\eta_1, \dots, \eta_k$ are closed forms and satisfy $\eta_i \wedge \eta_{i+1} = 0$ for $i = 1, \dots, k-1$, then $\int_\gamma \eta_1 \dots \eta_k$ depends only on the homotopy class of $\gamma$.
\item For $1$-forms $\eta_1, \dots, \eta_k, \eta_{k+1}, \dots, \eta_{k+l}$ and a path $\gamma$, we have
\begin{align*}
\int_\gamma \eta_1 \dots \eta_k \cdot \int_\gamma \eta_{k+1} \dots \eta_{k+l}
= \sum_{\sigma \in S_{k,l}} \int_\gamma \eta_{\sigma(1)} \dots \eta_{\sigma(k+l)},
\end{align*}
where $S_{k,l}$ denotes the set of $(k,l)$-shuffles.
\item For $1$-forms $\eta_1, \dots, \eta_k$ and a path $\gamma$, we have
\begin{align*}
\int_{\gamma^{-1}} \eta_1 \dots \eta_k = (-1)^k \int_\gamma \eta_k \dots \eta_1.
\end{align*}
\item For $1$-forms $\eta_1, \dots, \eta_k$ and paths $\gamma_1, \gamma_2$, we have
\begin{align*}
\int_{\gamma_1 \gamma_2} \eta_1 \dots \eta_k
= \sum_{j=0}^k \int_{\gamma_1} \eta_1 \dots \eta_j \cdot \int_{\gamma_2} \eta_{j+1} \dots \eta_k.
\end{align*}
\end{enumerate}
\end{prop}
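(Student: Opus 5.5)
The plan is to write $\gamma^*\eta_j = a_j(t)\,dt$ with $a_j$ piecewise continuous on $[0,1]$, and to prove each item from the simplex description
\[
\int_\gamma \eta_1\cdots\eta_k=\int_{\Delta_k} a_1(t_1)\cdots a_k(t_k)\,dt_1\cdots dt_k,
\qquad \Delta_k=\{0<t_1<\dots<t_k<1\}.
\]
Items (i), (iii), (iv) and (v) are then statements about decomposing simplices. Item (ii) needs a genuinely different argument, via a differentiation formula under homotopy.

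For (i), take a reparametrization $\gamma\circ\phi$ with $\phi:[0,1]\to[0,1]$ an increasing piecewise smooth bijection. Then $(\gamma\circ\phi)^*\eta_j=a_j(\phi(s))\phi'(s)\,ds$. The change of variables $t_j=\phi(s_j)$ maps $\Delta_k$ onto itself with Jacobian $\prod_j\phi'(s_j)$, so the integral is unchanged.

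For (iv), $\gamma^{-1}(t)=\gamma(1-t)$ pulls $\eta_j$ back to $-a_j(1-t)\,dt$. Substituting $u_j=1-t_{k+1-j}$ maps $\Delta_k$ to itself and reverses the order, giving $(-1)^k\int_\gamma\eta_k\cdots\eta_1$.

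For (iii), the product is an integral over $\Delta_k\times\Delta_l\subset[0,1]^{k+l}$. Up to a measure-zero set where coordinates coincide, this product is the disjoint union, over $(k,l)$-shuffles $\sigma$, of the simplices obtained by totally ordering the $k+l$ coordinates compatibly with both orders. On each piece the integrand becomes $a_{\sigma(1)}\cdots a_{\sigma(k+l)}$ after relabelling the variables in increasing order.

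For (v), parametrize $\gamma_1\gamma_2$ by $\gamma_1(2t)$ on $[0,1/2]$ and $\gamma_2(2t-1)$ on $[1/2,1]$; this is allowed by (i). Split $\Delta_k$ according to the index $j$ with $t_j\le 1/2<t_{j+1}$. The $j$-th piece is the product of a $j$-simplex in $[0,1/2]$ and a $(k-j)$-simplex in $[1/2,1]$, and rescaling each factor gives the $j$-th summand.

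For (ii), I would argue by induction on $k$, using the standard variation formula for iterated integrals of closed forms. Let $H(s,t)$ be a homotopy with fixed endpoints and put $I(s)=\int_{H(s,\cdot)}\eta_1\cdots\eta_k$. The steps are:
\begin{enumerate}
\item Write $I(s)=\int_0^1 F_{k-1}(s,t)\,a_k(s,t)\,dt$, where $F_{k-1}(s,t)$ is the iterated integral of $\eta_1\cdots\eta_{k-1}$ along the restriction of $H(s,\cdot)$ to $[0,t]$. This is the step that requires care.
\item Differentiate in $s$. Use $d\eta_k=0$, which gives $\partial_s a_k=\partial_t b_k$ where $H^*\eta_k=a_k\,dt+b_k\,ds$, and integrate by parts in $t$.
\item Use the recursive identity $\partial_t F_{k-1}=F_{k-2}\,a_{k-1}$ and its $s$-analogue, together with the inductive hypothesis.
\end{enumerate}
This gives
\[
\frac{dI}{ds}=\sum_{i} \pm\int \cdots\,(\eta_i\wedge\eta_{i+1})(\partial_s,\partial_t)\cdots+\text{boundary terms}.
\]
The boundary terms vanish because the endpoints are fixed, so $b_j(s,0)=b_j(s,1)=0$. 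The wedge terms vanish by hypothesis, hence $I$ is constant in $s$.

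I expect the main obstacle to be bookkeeping the inductive differentiation cleanly, and handling piecewise smoothness of $H$ together with the convergence assumptions. To keep this manageable, I would first smooth $H$ and reduce to the smooth case.
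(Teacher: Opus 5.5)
The paper does not prove this proposition. It is quoted as standard background (Chen's basic properties of iterated integrals) and then simply used, e.g.\ item (v) when the path from $i\infty$ to $0$ is split at $i/\sqrt{N}$. So there is no argument in the paper to compare with. Your proposal is the standard self-contained proof, and it is correct. The simplex arguments for (i), (iii), (iv), (v) are fine as written. For instance, in (iv) the substitution $u_j=1-t_{k+1-j}$ turns $\prod_j a_j(1-t_j)$ into $\prod_i a_{k+1-i}(u_i)$, which is exactly the integrand of $\eta_k\cdots\eta_1$.

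For (ii), the one thing to pin down is what you induct on. It cannot be homotopy invariance for words of length $k-1$. The truncated paths $H(s,\cdot)|_{[0,t]}$ have right endpoint $H(s,t)$ moving with $s$, so that hypothesis says nothing about $\partial_sF_{k-1}(s,t)$. The statement to induct on is the pointwise variation formula
\[
\partial_s F_m(s,t)=F_{m-1}(s,t)\,b_m(s,t)\qquad(0\le t\le 1),
\]
which needs only $b_1(s,0)=0$. Assuming it for $m-1$, differentiate $F_m=\int_0^tF_{m-1}a_m\,du$, use $\partial_sa_m=\partial_ub_m$, and integrate by parts:
\[
\partial_s F_m(s,t)=F_{m-1}(s,t)\,b_m(s,t)+\int_0^t F_{m-2}(s,u)\,(b_{m-1}a_m-a_{m-1}b_m)(s,u)\,du .
\]
The last integrand is $F_{m-2}\cdot H^*(\eta_{m-1}\wedge\eta_m)(\partial_s,\partial_t)=0$. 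Hence $I'(s)=F_{k-1}(s,1)\,b_k(s,1)=0$.

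This is what your steps 1--3 (``the $s$-analogue'' of $\partial_tF_{k-1}=F_{k-2}a_{k-1}$) amount to. So it is a matter of stating the induction hypothesis correctly, not a missing idea. Note, though, that the boundary term at the moving endpoint $t$ does not vanish: it is exactly what produces $F_{m-1}b_m$. Only the final boundary term at $t=1$ is killed by the fixed endpoint.

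Finally, as literally stated (paths on the closed interval $[0,1]$, regular forms), all integrals are proper, so your compact-interval arguments cover the proposition. The paper's later use with endpoints at the cusps $i\infty$ and $0$ would additionally need a truncation-and-limit argument, which neither you nor the paper spells out.
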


\section{Modular Iterated Integrals}

Let $\mathbb{H} = \{ z \in \mathbb{C} \mid \mathrm{Im}\, z > 0 \}$ be the upper-half plane with its action of $\mathrm{SL}_2(\mathbb{Z})$:
\begin{align*}
	\gamma z = \frac{az + b}{cz + d}, \quad \text{where } \gamma =
	\begin{pmatrix}
		a & b \\
		c & d
	\end{pmatrix}
	\in \mathrm{SL}_2(\mathbb{Z}).
\end{align*}
Let $\Gamma$ be a congruence subgroup of $\mathrm{SL}_2(\mathbb{Z})$ and let $k$ be an integer.

For any $\gamma \in \mathrm{SL}_2(\mathbb{Z})$, and a function $f: \mathfrak{H} \to \mathbb{C}$, we define the slash operator by
\begin{align*}
	f|[\gamma]_k(z) := (cz + d)^{-k} f(\gamma z).
\end{align*}
We say that a holomorphic function
\begin{align*}
	f: \mathfrak{H} \to \mathbb{C}
\end{align*}
is a modular form of weight $k$ with respect to $\Gamma$ if it satisfies
\begin{align*}
	f|[\gamma]_k(z) = f(z) \quad \text{for any } \gamma \in \Gamma,
\end{align*}
and $f$ is holomorphic at all cusps, i.e., at $\mathbb{Q} \cup \{ \infty \}$.\\
In this paper, we only consider the congruence subgroup $\Gamma_0(N)$.
We now define the modular iterated integrals.
\begin{dfn}
Let $U \subset \mathbb{C}$ be a simply connected domain, and let $a, b \in U$. Suppose $f_1, \ldots, f_n$ are holomorphic functions on $U$, and let $s_1, \ldots, s_n \in \mathbb{C}$. Then we define the iterated integral
\[
I_a^b\binom{f_1, \ldots, f_n}{s_1, \ldots, s_n}
:= \int_a^b f_1(z_1) z_1^{s_1} \frac{dz_1}{z_1}
\int_a^{z_1} f_2(z_2) z_2^{s_2} \frac{dz_2}{z_2}
\cdots
\int_a^{z_{n-1}} f_n(z_n) z_n^{s_n} \frac{dz_n}{z_n}.
\]
Moreover, when $U = \mathbb{H}$ and each $f_i \in M_{k_i}(\Gamma_0(N), \chi_i)$, we call
\[
I_{i\infty}^0\binom{f_1, \ldots, f_n}{s_1, \ldots, s_n}=\int_{i\infty}^0 f_1(z_1) z_1^{s_1} \frac{dz_1}{z_1}\int_{i\infty}^{z_1} f_2(z_2) z_2^{s_2} \frac{dz_2}{z_2}\cdots\int_{i\infty}^{z_{n-1}} f_n(z_n) z_n^{s_n} \frac{dz_n}{z_n}
\]
a modular iterated integral.
\end{dfn}
In the following, we show that the above definition is well-defined, that is, it admits an analytic continuation to a meromorphic function on $\mathbb{C}^n$, and moreover satisfies a functional equation.

\begin{lem}\label{lem1}
Let $b \in \mathbb{C}$, and suppose
\[
\mathrm{Re}(s_n) < 0, \quad \mathrm{Re}(s_n + s_{n-1}) < 0, \quad \ldots, \quad \mathrm{Re}(s_n + \cdots + s_1) < 0.
\]
Then the following holds:
\begin{align}\label{a}
I_{i\infty}^{b} \binom{1, \ldots, 1}{s_1, \ldots, s_n}
= \frac{b^{s_1 + \cdots + s_n}}{s_n(s_n + s_{n-1}) \cdots (s_n + \cdots + s_1)}.
\end{align}
\end{lem}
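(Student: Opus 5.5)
We argue by induction on $n$, integrating from the innermost variable outward. Throughout we fix the principal branch of $z^{s}=e^{s\log z}$ on $\mathbb{H}$ (or on a slit plane containing $\mathbb{H}$ and the path). All integrals are taken along paths in $\mathbb{H}$ going to $i\infty$ vertically, say $z=x+it$ with $t\to\infty$, so that the integrands extend holomorphically along the path.

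For $n=1$ we need
\[
I_{i\infty}^{b}\binom{1}{s_1}=\int_{i\infty}^{b} z^{s_1}\frac{dz}{z}.
\]
The function $z^{s_1}/s_1$ is a primitive of $z^{s_1-1}$. Along a vertical ray we have
\[
|z^{s_1}| = |z|^{\mathrm{Re}\, s_1}\, e^{-\mathrm{Im}(s_1)\arg z},
\]
and $\arg z\to\pi/2$ as $t\to\infty$. Hence the hypothesis $\mathrm{Re}(s_1)<0$ gives $z^{s_1}\to 0$ as $z\to i\infty$. The same bound shows that the integral converges absolutely, since the integrand is $O(|z|^{\mathrm{Re}\, s_1 -1})$. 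Therefore the integral equals $b^{s_1}/s_1$, which is \eqref{a} for $n=1$.

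For the inductive step, note that the innermost $n-1$ integrations in the definition form exactly the same type of integral with the parameters $s_2,\dots,s_n$. More precisely,
\[
I_{i\infty}^{b}\binom{1,\dots,1}{s_1,\dots,s_n}=\int_{i\infty}^{b} z_1^{s_1}\, I_{i\infty}^{z_1}\binom{1,\dots,1}{s_2,\dots,s_n}\,\frac{dz_1}{z_1}.
\]
The hypotheses for $(s_2,\dots,s_n)$ are $\mathrm{Re}(s_n)<0$, $\mathrm{Re}(s_n+s_{n-1})<0$, $\dots$, $\mathrm{Re}(s_n+\cdots+s_2)<0$. These are a subset of our hypotheses, so the induction hypothesis applies with $b=z_1$ and gives
\[
I_{i\infty}^{z_1}\binom{1,\dots,1}{s_2,\dots,s_n}=\frac{z_1^{s_2+\cdots+s_n}}{s_n(s_n+s_{n-1})\cdots(s_n+\cdots+s_2)}.
\]
Substituting this, and using $z_1^{s_1}z_1^{s_2+\cdots+s_n}=z_1^{s_1+\cdots+s_n}$ for the principal branch (the exponents are simply added), we are reduced to the $n=1$ case with exponent $S=s_1+\cdots+s_n$. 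The last hypothesis gives $\mathrm{Re}(S)<0$, so this integral equals $b^{S}/S$, which yields \eqref{a}.

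The only delicate point is justifying the nested improper integrals: both the convergence at $i\infty$ and the vanishing of the boundary terms. This is handled by the growth estimate $|z^{s}|\le C_s|z|^{\mathrm{Re}\, s}$ on $\mathbb{H}$, where $C_s=e^{\pi|\mathrm{Im}\, s|}$ because $0<\arg z<\pi$. At each stage the integrand is then $O(|z_1|^{\mathrm{Re}(S)-1})$, which is integrable at infinity since $\mathrm{Re}(S)<0$.

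Finally, the result does not depend on the choice of path from $i\infty$ to $b$. All integrands are holomorphic and decay at the cusp, so Cauchy's theorem (or \Cref{int}(ii)) gives path independence. If $b$ lies on the boundary, for instance $b=0$, the formula is obtained as a limit whenever it makes sense.
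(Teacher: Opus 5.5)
Your proof is correct and is essentially the paper's argument: induction on $n$, applying the inductive hypothesis to the inner $n-1$ integrals (parameters $s_2,\dots,s_n$, whose hypotheses are a subset of the given ones), then evaluating $\int_{i\infty}^{b} z_1^{S-1}\,dz_1 = b^{S}/S$ with $S=s_1+\cdots+s_n$. Your remarks on the branch of $z^s$, the bound $|z^s|\le e^{\pi|\mathrm{Im}\, s|}|z|^{\mathrm{Re}\, s}$ on $\mathbb{H}$, and path independence supply justifications the paper leaves implicit; note that at the $k$-th stage the relevant exponent is the partial sum $s_k+\cdots+s_n$ rather than $S$, and the corresponding hypothesis covers exactly that.
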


\begin{proof}
We prove the claim by induction on $n$. The case $n = 1$ is clear.

Assume that the statement holds for $n-1$. Then we compute
\begin{align*}
I_{i\infty}^{b} \binom{1, \ldots, 1}{s_1, \ldots, s_n}
&= \int_{i\infty}^{b} z_1^{s_1} \frac{dz_1}{z_1} 
\int_{i\infty}^{z_1} z_2^{s_2} \frac{dz_2}{z_2} \cdots 
\int_{i\infty}^{z_{n-1}} z_n^{s_n} \frac{dz_n}{z_n} \\
&= \int_{i\infty}^{b} \frac{z_1^{s_1 + \cdots + s_n}}{s_n(s_n + s_{n-1}) \cdots (s_n + \cdots + s_2)} \frac{dz_1}{z_1} \\
&= \frac{b^{s_1 + \cdots + s_n}}{s_n(s_n + s_{n-1}) \cdots (s_n + \cdots + s_1)}.
\end{align*}
This completes the proof.
\end{proof}

This lemma shows that the left-hand side of (\ref{a}) converges under the condition
\[
\mathrm{Re}(s_n) < 0, \quad \mathrm{Re}(s_n + s_{n-1}) < 0, \quad \ldots, \quad \mathrm{Re}(s_n + \cdots + s_1) < 0,
\]
while the right-hand side defines a rational function on $\mathbb{C}^n$, meromorphic with possible poles only along the hyperplanes
\[
s_n = 0, \quad s_n + s_{n-1} = 0, \quad \ldots, \quad s_n + \cdots + s_1 = 0.
\]
Thus, the \cref{lem1} gives the analytic continuation of the iterated integral $I_{i\infty}^{b} \binom{1, \ldots, 1}{s_1, \ldots, s_n}$ to a meromorphic function on $\mathbb{C}^n$.

\begin{lem}\label{lem2}
Let $(\{1\}^n) = (1,\dots,1)$ denote the sequence consisting of $n$ copies of $1$, and suppose $n = n_1 + \dots + n_{l+1} + l$. Then
\begin{align*}
I_{i\infty}^{\frac{i}{\sqrt{N}}}\binom{\{1\}^{n_{1}},f_{n_1'}^{0},\dots,\{1\}^{n_{l}},f_{n_l'}^{0},\{1\}^{n_{l+1}}}{s_{1},\dots,s_{n}}
\end{align*}
extends to a meromorphic function on $\mathbb{C}^n$ with simple poles along the divisors
\[
s_n = 0, \quad s_n + s_{n-1} = 0, \quad \dots, \quad s_n + \dots + s_{n-n_{l+1}} = 0.
\]
Here, $n_l' = n_1 + \dots + n_l + l$.
\end{lem}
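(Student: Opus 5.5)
We read $f^0_{m}$ as the modular form $f_m$ with its constant term removed, $f^0_m(z)=f_m(z)-a_0(f_m)=\sum_{k\ge1}a_k(f_m)q^k$ with $q=e^{2\pi i z/N}$ (or $e^{2\pi i z}$). It therefore decays exponentially as $\mathrm{Im}\,z\to\infty$, while every entry $1$ is the constant function. The endpoint $b=i/\sqrt N$ lies in $\mathbb H$, so only the lower limit $i\infty$ causes divergence. The plan is to treat the integral from the innermost variable outward, by induction on the blocks, separating the last block $\{1\}^{n_{l+1}}$, which is handled exactly by \cref{lem1}, from the earlier blocks, which give entire functions.

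\textbf{Step 1 (innermost block).} The innermost $n_{l+1}$ integrations involve only the entries $1$ with exponents $s_{n-n_{l+1}+1},\dots,s_n$. By \cref{lem1}, on the region $\mathrm{Re}(s_n)<0,\dots$ they produce
\[
\frac{z^{\,s_{n-n_{l+1}+1}+\cdots+s_n}}{s_n(s_n+s_{n-1})\cdots(s_n+\cdots+s_{n-n_{l+1}+1})},
\]
where $z$ is the variable of the integration just outside. This is a rational factor times a power of $z$, with simple poles only along the listed hyperplanes. The deepest listed divisor, $s_n+\dots+s_{n-n_{l+1}}=0$, involves the exponent of the adjacent $f^0$ slot. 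We will check where it arises. The safest approach is to absorb the $f^0_{n_l'}$ integration as well and not claim more. If no pole actually appears there, the statement still holds, since it only asserts that poles are contained in those divisors.

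\textbf{Step 2 (an $f^0$ slot).} Next comes $\int_{i\infty}^{w} f^0_{n_l'}(z) z^{s_{n_l'}+S}\,\frac{dz}{z}$, where $S$ is the sum of the inner exponents. Because $f^0$ decays like $e^{-c\,\mathrm{Im} z}$, this integral converges for all exponents along a vertical path. It defines an entire function of all the $s$'s and a holomorphic function of $w\in\mathbb H$, and it decays exponentially as $\mathrm{Im}\,w\to\infty$, locally uniformly in $s$. Concretely, $|g(w)|\le C_K e^{-c\,\mathrm{Im} w}|w|^{A}$ for $s$ in a compact set $K$.

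\textbf{Step 3 (induction).} Claim: for any function $g(w,s)$ that is holomorphic in $w$, entire in $s$, and exponentially decaying at $i\infty$ locally uniformly, each further integration against $z^{s_j}\,dz/z$ (an entry $1$) or against $f^0 z^{s_j}\,dz/z$ preserves these three properties. For an entry $1$, write $\int_{i\infty}^w g(z)z^{s_j-1}dz$. The integrand still decays exponentially, so the integral converges for all $s_j$ and by dominated convergence is entire in $s$ (holomorphic parameter integrals), and it again decays exponentially in $w$. Iterating outward through all blocks and evaluating at $w=i/\sqrt N$, we get an entire function times the rational prefactor of Step 1. This gives meromorphic continuation to $\mathbb C^n$ with simple poles only along the stated divisors, provided the hyperplanes are distinct so that each pole is simple. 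In the case $l=0$ the result reduces directly to \cref{lem1}.

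\textbf{Main obstacle.} The main difficulty is the uniform exponential decay estimate needed to justify Steps 2 and 3, in particular that $\int_{i\infty}^w e^{-c\,\mathrm{Im}z}|z|^{A}|dz|$ decays like $e^{-c'\mathrm{Im}w}$ uniformly for $s$ in compact sets. I would fix the path to be the vertical ray, bound $|z^{s}|\le |z|^{\mathrm{Re}s}e^{\pi|\mathrm{Im}s|}$, and bound the result by an incomplete gamma estimate. A secondary point is the bookkeeping of exactly which partial sums occur in the denominators.
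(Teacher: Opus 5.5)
Your proof is correct and takes the paper's route. Lemma~\ref{lem1} peels off the innermost block $\{1\}^{n_{l+1}}$ as a rational prefactor, and the remaining iterated integral, which now ends in $f^0_{n_l'}$ with exponent $s_{n_l'}+\dots+s_n$, is entire because $f^0$ decays rapidly; the paper only asserts this holomorphy, and your Steps 2--3 actually justify it.

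You are also right to distrust the deepest divisor. Lemma~\ref{lem1} applied to $n_{l+1}$ ones yields only the factors $s_n,\ s_n+s_{n-1},\dots,\ s_n+\dots+s_{n-n_{l+1}+1}$, so the paper's extra factor $s_n+\dots+s_{n-n_{l+1}}$ is an off-by-one (compare $B_m$ in Proposition~\ref{prop1}). The statement therefore holds only in the ``poles contained in these divisors'' sense you adopted.
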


\begin{proof}
If $n_{l+1} = 0$, the function is holomorphic, since $f_i^0(z)$ decays rapidly as $z \to i\infty$. When $n_{l+1} \neq 0$, assume that
\[
\mathrm{Re}(s_n) < 0,\quad \mathrm{Re}(s_ + \dots + s_{n-n_{l+1}}) < 0.
\]
Then, by \cref{lem1}, we have:
\begin{align*}
&I_{i\infty}^{\frac{i}{\sqrt{N}}}\binom{\{1\}^{n_{1}},f_{n_1'}^{0},\dots,\{1\}^{n_{l}},f_{n_l'}^{0},\{1\}^{n_{l+1}}}{s_{1},\dots,s_{n}} \\
&= \frac{1}{s_n(s_n + s_{n-1})\dots(s_n + \dots + s_{n-n_{l+1}})}
I_{i\infty}^{\frac{i}{\sqrt{N}}}\binom{\{1\}^{n_{1}},f_{n_1'}^{0},\dots,\{1\}^{n_{l}},f_{n_l'}^{0}}{s_{1},\dots,s_{n_l'-1},s_{n_l'} + \dots + s_n}.
\end{align*}
This expression shows that the function admits a meromorphic continuation.
\end{proof}

\begin{theo}[\cite{IB}]
The modular iterated integral admits a meromorphic continuation to $\mathbb{C}^{n}$, with poles located along the divisors
\begin{align*}
&s_{l}=0,\quad s_{l}+s_{l-1}=0,\quad \dots,\quad s_{l}+\dots+s_{1}=0,\\
&s_{l+1}=k_{l+1},\quad s_{l+1}+s_{l+2}=k_{l+1}+k_{l+2},\quad \dots,\quad s_{l+1}+\dots+s_{n}=k_{l+1}+\dots+k_{n}
\end{align*}
for $l=1,\dots,n$.
\end{theo}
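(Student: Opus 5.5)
Following the approach of Lemma \ref{lem2}, we split the path of integration at the point $\frac{i}{\sqrt{N}}$, which is fixed by the Fricke involution $W_N: z\mapsto -\frac{1}{Nz}$. By Proposition \ref{int}(v), applied to the composite path $i\infty \to \frac{i}{\sqrt{N}} \to 0$,
\[
I_{i\infty}^0\binom{f_1,\dots,f_n}{s_1,\dots,s_n}=\sum_{l=0}^{n} I_{i\infty}^{\frac{i}{\sqrt N}}\binom{f_1,\dots,f_l}{s_1,\dots,s_l}\cdot I_{\frac{i}{\sqrt N}}^{0}\binom{f_{l+1},\dots,f_n}{s_{l+1},\dots,s_n},
\]
after checking that the iterated-integral conventions of the definition match Proposition \ref{int}(v) with the indices in this order. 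It therefore suffices to continue each factor meromorphically and to locate its poles. The two families of divisors in the statement should arise from the two endpoints: the family $s_l+\dots+s_j=0$ from the first factors near $i\infty$, and the family $s_{l+1}+\dots+s_j=k_{l+1}+\dots+k_j$ from the second factors near $0$.

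\textbf{Factor near $i\infty$.} Write $f_j=a_0(f_j)+f_j^0$, where $f_j^0$ has vanishing constant term and therefore decays exponentially as $z\to i\infty$. Expanding multilinearly, $I_{i\infty}^{i/\sqrt N}\binom{f_1,\dots,f_l}{s_1,\dots,s_l}$ becomes a finite sum of constants times integrals of the type treated in Lemma \ref{lem2}: words in $1$ and the $f_j^0$. Lemma \ref{lem2}, applied with $n=l$, says each such term is meromorphic on $\mathbb{C}^l$. Its only poles lie along $s_l+\dots+s_{l-m}=0$, where $m$ runs up to the length of the terminal block of $1$'s. Taking the union over all terms gives poles contained in $\{s_l+\dots+s_j=0\}$ for $1\le j\le l$. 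The remaining factor $I_{i\infty}^{i/\sqrt N}\binom{f_1^0,\dots}{\dots}$, with no trailing $1$'s, is entire by the exponential decay.

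\textbf{Factor near $0$.} Substitute $z_j=W_N w_j=-1/(Nw_j)$ in every variable. This maps the path from $\frac{i}{\sqrt N}$ to $0$ onto the path from $\frac{i}{\sqrt N}$ to $i\infty$, and each form $dz/z$ goes to $-dw/w$. Modularity under the Fricke involution gives $f_j(-1/(Nw))=c_j N^{k_j/2}w^{k_j}(f_j|W_N)(w)$, where $f_j|W_N\in M_{k_j}(\Gamma_0(N))$ (with the character adjusted). Moreover $z^{s}=(-1/(Nw))^{s}$ equals a constant times $w^{-s}$, provided we fix the branch consistently on $\mathbb{H}$. Hence
\[
I_{\frac{i}{\sqrt N}}^{0}\binom{f_{l+1},\dots,f_n}{s_{l+1},\dots,s_n}=C(s)\, I_{\frac{i}{\sqrt N}}^{i\infty}\binom{f_{l+1}|W_N,\dots,f_n|W_N}{k_{l+1}-s_{l+1},\dots,k_n-s_n},
\]
where $C(s)$ is entire and nonvanishing. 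Reversing the path with Proposition \ref{int}(iv) turns the right-hand side into $\pm I_{i\infty}^{i/\sqrt N}$ of the reversed word $(f_n|W_N,\dots,f_{l+1}|W_N)$ with exponents $(k_n-s_n,\dots,k_{l+1}-s_{l+1})$. We now apply the previous step to this integral. Its innermost variable carries the exponent $k_{l+1}-s_{l+1}$, so its poles lie along $\sum_{i=l+1}^{j}(k_i-s_i)=0$, i.e. $s_{l+1}+\dots+s_j=k_{l+1}+\dots+k_j$, as stated.

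The main obstacle is the bookkeeping in the last two steps, which has three parts. First, the branch of $z^{s}$ under $W_N$ must be tracked so that $C(s)$ is genuinely entire. Second, the reversal must pair the last variable of the reversed word with $s_{l+1}$; this is what produces sums beginning at $s_{l+1}$ rather than ending at $s_n$. Third, Lemma \ref{lem2} must be justified when several constant terms are present, not only in the decomposed form it states. For the third point I would first redo the induction of Lemma \ref{lem1} directly on a trailing block $\{1\}^m$ and then proceed by induction on the number of $f^0$ letters. Finally, products of meromorphic functions on the factors $\mathbb{C}^l\times\mathbb{C}^{n-l}$ are meromorphic on $\mathbb{C}^n$, and the pole set is the union described in the statement.
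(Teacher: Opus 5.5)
\textbf{The splitting step fails.} Your route is the paper's own: split at the Fricke fixed point $i/\sqrt N$, expand constant terms and use Lemma~\ref{lem2} at $i\infty$, then use the Fricke substitution plus path reversal at $0$. But the check you deferred in the first step actually fails, and the paper's displayed splitting has the same defect. In the definition of $I_a^b$, the letter $f_1$ sits on the outermost variable $z_1$, nearest the endpoint $b$, and $f_n$ sits on the innermost variable $z_n$, nearest the base point $a$. In the notation of Proposition~\ref{int} this is the word $\eta_n\eta_{n-1}\cdots\eta_1$ with $\eta_j=f_j(z)z^{s_j}\,dz/z$, not $\eta_1\cdots\eta_n$. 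Applying (v) to that word gives
\[
I_{i\infty}^{0}\binom{f_1,\dots,f_n}{s_1,\dots,s_n}=\sum_{l=0}^{n} I_{\frac{i}{\sqrt N}}^{0}\binom{f_1,\dots,f_l}{s_1,\dots,s_l}\cdot I_{i\infty}^{\frac{i}{\sqrt N}}\binom{f_{l+1},\dots,f_n}{s_{l+1},\dots,s_n}.
\]
For $n=2$ you can check this by hand. The mixed region has $z_2$ on the segment from $i\infty$ to $i/\sqrt N$ and $z_1$ on the segment from $i/\sqrt N$ to $0$. So the cross term is $I_{i/\sqrt N}^{0}\binom{f_1}{s_1}\, I_{i\infty}^{i/\sqrt N}\binom{f_2}{s_2}$, not $I_{i\infty}^{i/\sqrt N}\binom{f_1}{s_1}\, I_{i/\sqrt N}^{0}\binom{f_2}{s_2}$ as you wrote.

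\textbf{What changes after the fix.} With the corrected splitting, both of your steps go through unchanged, but the labels move.
The factor at $i\infty$ carries $s_{l+1},\dots,s_n$, and Lemma~\ref{lem2} gives poles only on $s_m+\dots+s_n=0$.
After substitution and reversal, the factor at $0$ becomes $I_{i\infty}^{i/\sqrt N}$ of $(f_l|W_N,\dots,f_1|W_N)$ with exponents $(k_l-s_l,\dots,k_1-s_1)$. Its innermost exponent is $k_1-s_1$, so its poles lie on $s_1+\dots+s_m=k_1+\dots+k_m$. Your second bookkeeping point should therefore produce sums beginning at $s_1$, not at $s_{l+1}$.

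So the only possible polar divisors are these two families. They are contained in the stated list: the first with $l=n$, the second with $l=0$. The theorem therefore follows as an upper bound on the polar locus, provided $l=0$ is admitted. Your sum includes $l=0$, but the statement's range $l=1,\dots,n$ does not. It is needed: for $n=1$, $s_1=k_1$ is a genuine pole whenever $f_1|W_N$ has nonzero constant term.

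The bound is also not sharp. For $n=2$, the divisors $s_1=0$ and $s_2=k_2$ arise only from the mis-ordered splitting. Your closing claim that the pole set \emph{is} the stated union should be weakened to containment.
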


\begin{proof}
By \cref{int}, we have:
\begin{align*}
I_{i\infty}^{0}\binom{f_{1},\dots,f_{n}}{s_{1},\dots,s_{n}}
= \sum_{j=0}^{n} I_{i\infty}^{\frac{i}{\sqrt{N}}}\binom{f_1,\dots,f_j}{s_1,\dots,s_j} \cdot I_{\frac{i}{\sqrt{N}}}^{0}\binom{f_{j+1},\dots,f_n}{s_{j+1},\dots,s_n}.
\end{align*}
Consider the change of variable $z \mapsto -\frac{1}{\sqrt{N}z}$ in the iterated integral $
I_{\frac{i}{\sqrt{N}}}^{0}\binom{f_{j+1},\dots,f_n}{s_{j+1},\dots,s_n}$.
Then we obtain:
\begin{align*}
&I_{\frac{i}{\sqrt{N}}}^{0}\binom{f_{j+1},\dots,f_n}{s_{j+1},\dots,s_n} \\
&= (-1)^{s_{j+1}+\dots+s_n} N^{-s_{j+1}-\dots-s_n + \frac{k_{j+1}+\dots+k_n}{2}}
I_{i\infty}^{\frac{i}{\sqrt{N}}}\binom{\tilde{f}_n,\dots,\tilde{f}_{j+1}}{k_n - s_n, \dots, k_{j+1} - s_{j+1}},
\end{align*}
where $\tilde{f} := f|[\omega_N]_k$ denotes the slash operator of weight $k$.

Now writing $f = f^0 + a_0$ and applying \cref{lem2}, we see that $I_{i\infty}^{0}\binom{f_{1},\dots,f_{n}}{s_{1},\dots,s_{n}}$
admits a meromorphic continuation to $\mathbb{C}^n$.
\end{proof}

This calculation yields the following functional equation:

\begin{cor}[Functional Equation]
Define
\[
Z\binom{f_1,\dots,f_n}{s_1,\dots,s_n} := N^{\frac{s_1 + \dots + s_n}{2}} I_{i\infty}^{0} \binom{f_1,\dots,f_n}{s_1,\dots,s_n}.
\]
Then we have the identity:
\[
Z\binom{f_1,\dots,f_n}{s_1,\dots,s_n}
= (-1)^{s_1+\dots+s_n} Z\binom{\tilde{f}_n,\dots,\tilde{f}_1}{k_n - s_n, \dots, k_1 - s_1}.
\]
\end{cor}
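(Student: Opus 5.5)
The plan is to apply the same change of variables used in the proof of the meromorphic continuation theorem, but this time to the whole path from $i\infty$ to $0$ rather than only the half from $\frac{i}{\sqrt{N}}$ to $0$. We may work in the region of absolute convergence, where the iterated integral is genuinely an integral. The identity then extends to all of $\mathbb{C}^n$ by uniqueness of meromorphic continuation, since both sides are meromorphic by the preceding theorem.

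\emph{Step 1: substitution.} Substitute $z_i = -\frac{1}{N w_i}$ in every variable, i.e.\ apply the Fricke involution $\omega_N$. It maps the path from $i\infty$ to $0$ onto the path from $0$ to $i\infty$. By the definition of $\tilde f = f|[\omega_N]_k$, we have
\[
f_i\!\left(-\tfrac{1}{Nw}\right) = N^{k_i/2} w^{k_i}\tilde f_i(w),
\]
up to the normalisation convention for $\omega_N$, which I will fix so that this holds. The remaining factors transform as
\[
z^{s_i} = (-1)^{s_i} N^{-s_i} w^{-s_i}, \qquad \frac{dz}{z} = -\frac{dw}{w}.
\]
The branch of $(-1)^{s}$ is taken with the same convention as in the proof of the theorem. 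Hence each factor $f_i(z_i)z_i^{s_i}\frac{dz_i}{z_i}$ becomes
\[
-(-1)^{s_i}N^{\frac{k_i}{2}-s_i}\,\tilde f_i(w_i)\,w_i^{k_i-s_i}\frac{dw_i}{w_i}.
\]

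\emph{Step 2: reorder the integral.} The substituted integral runs from $0$ to $i\infty$ with the same nesting order. Reverse the path using \cref{int}(iv). This turns it into an integral from $i\infty$ to $0$ with the order of the forms reversed, producing a sign $(-1)^n$. That sign cancels the $(-1)^n$ coming from the $n$ factors $-\frac{dw}{w}$. The result is
\[
I_{i\infty}^{0}\binom{f_1,\dots,f_n}{s_1,\dots,s_n}
=(-1)^{\sum s_i}N^{\sum(\frac{k_i}{2}-s_i)}\,I_{i\infty}^{0}\binom{\tilde f_n,\dots,\tilde f_1}{k_n-s_n,\dots,k_1-s_1}.
\]

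\emph{Step 3: normalise.} Multiply both sides by $N^{\sum s_i/2}$. The left side becomes $Z\binom{f_1,\dots,f_n}{s_1,\dots,s_n}$. On the right, the power of $N$ becomes
\[
N^{\sum(k_i-s_i)/2} = N^{\frac{1}{2}\sum_i (k_i-s_i)},
\]
which is exactly the normalising factor in $Z\binom{\tilde f_n,\dots,\tilde f_1}{k_n-s_n,\dots,k_1-s_1}$. This gives the stated identity.

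\emph{Main obstacle.} The delicate point is that for non-cusp forms no region of absolute convergence exists for the integral along the whole path. Near $i\infty$ the constant terms force $\mathrm{Re}$ of the partial sums of the $s$'s to be negative, while near $0$ the conditions go the other way, involving the $k$'s.

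To handle this, I would work instead with the splitting at $\frac{i}{\sqrt{N}}$ from the proof of the theorem, since $\omega_N$ fixes $\frac{i}{\sqrt{N}}$. The left side decomposes as
\[
\sum_{j=0}^{n} I_{i\infty}^{\frac{i}{\sqrt{N}}}\binom{f_1,\dots,f_j}{s_1,\dots,s_j}\; I_{\frac{i}{\sqrt{N}}}^{0}\binom{f_{j+1},\dots,f_n}{s_{j+1},\dots,s_n}.
\]
Do the same for the $\tilde f$ side. The displayed formula in the proof of the theorem transforms each half-integral over $[\frac{i}{\sqrt{N}},0]$ into a half-integral over $[i\infty,\frac{i}{\sqrt{N}}]$ for the reversed, tilded data. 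The same computation applied to the $\tilde f$'s, using $\tilde{\tilde f} = f$ up to the normalisation of $\omega_N$, transforms the other halves.

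Each half-integral is meromorphic by \cref{lem2}, so this identity holds term by term as meromorphic functions. The $j$-th term on one side then matches the $(n-j)$-th term on the other, and summing gives the functional equation. I expect the bookkeeping of signs and branches of $(-1)^{s}$, together with checking $\omega_N^2$ acting trivially, to be the only genuine care needed.
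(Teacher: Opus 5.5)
Your proposal is correct and follows the paper's argument. The paper derives the corollary from the calculation in the proof of the continuation theorem: it splits the path at $\frac{i}{\sqrt N}$, the fixed point of $z\mapsto -\frac{1}{Nz}$, and transforms the halves, and you are right that Steps 1--3 are only formal for non-cusp forms, since the whole-path integral then has no region of convergence.

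Two small bookkeeping corrections. First, in the paper's nesting convention the innermost variable $z_n$ lies nearest $i\infty$, so the splitting actually reads $\sum_{j} I_{\frac{i}{\sqrt N}}^{0}\binom{f_1,\dots,f_j}{s_1,\dots,s_j}\, I_{i\infty}^{\frac{i}{\sqrt N}}\binom{f_{j+1},\dots,f_n}{s_{j+1},\dots,s_n}$. The transposed version you copied from the paper is wrong in general for $n\ge 2$, although your $j\leftrightarrow n-j$ term matching goes through unchanged with the corrected formula. Second, $\omega_N^2$ is the scalar $-N$, so $\tilde{\tilde f}=(-1)^{k}f$ rather than $f$. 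This sign exactly cancels the $(-1)^{k}$ hidden in $(-1)^{k-s}=e^{\pi i(k-s)}$. Alternatively, substitute $z=-\frac{1}{Nw}$ directly in the other half as well, and you never need $\tilde{\tilde f}$.
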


\begin{rem}
Let $f$ satisfy the functional equation $f(t^{-1}) = \varepsilon t^w f(t)$, and admit a decomposition $f = f^{0} + f^{\infty}$, where $f^{\infty} \in \mathbb{C}[t]$ and $f^{0}$ decays rapidly to zero as $t \to \infty$. 
In this setting, Brown\cite{IB} proved the same result.
\end{rem}

\section{Multiple Modular $L$-Functions and Modular Iterated Integrals}

In this section, we define multiple modular $L$-functions and explain their connection with modular iterated integrals.

\begin{dfn}
Let $N$ be a positive integer, and let $f_i = \sum_{j=0}^{\infty} a_j^{(i)} q^j$ be a modular form in $M_{k_i}(\Gamma_0(N), \chi_i)$. Then the multiple modular $L$-function is defined by
\begin{align*}
L\binom{f_1, \dots, f_n}{s_1, \dots, s_n}
:= (-2\pi i)^{-(s_1 + \dots + s_n)}
\sum_{m_1, \dots, m_n > 0}
\frac{a_{m_1}^{(1)} \cdots a_{m_n}^{(n)}}
{(m_1 + \dots + m_n)^{s_1}
(m_2 + \dots + m_n)^{s_2}
\cdots m_n^{s_n}}.
\end{align*}
\end{dfn}

\begin{rem}
The series defining the multiple modular $L$-function converges when the real part of $s_1$ is sufficiently large.
\end{rem}

\begin{ex}
Let $n = 2$, and let
\[
f_i = -\frac{B_{k_i}}{2k_i} + \sum_{j=1}^\infty \sigma_{k_i - 1}(j) q^j,
\]
where $k_i$ are even integers. Then each $f_i$ is a modular form. The multiple modular $L$-function associated to $f_1$ and $f_2$ is given by
\begin{align*}
(-2\pi i)^{s_1 + s_2} L\binom{f_1, f_2}{s_1, s_2}
&= \sum_{n_1, n_2 > 0}
\frac{\sigma_{k_1 - 1}(n_1) \sigma_{k_2 - 1}(n_2)}
{(n_1 + n_2)^{s_1} n_2^{s_2}} \\
&= \sum_{n_1, m_1, n_2, m_2 > 0}
\frac{m_1^{k_1 - 1} m_2^{k_2 - 1}}
{(n_1 m_1 + n_2 m_2)^{s_1} (n_2 m_2)^{s_2}}.
\end{align*}
\end{ex}

\begin{rem}
This example shows that multiple modular $L$-functions of Eisenstein series can be expressed in terms of the $L$-functions of multiple Eisenstein series.
\end{rem}

In what follows, we present a formula that expresses $I_{i\infty}^{0}\binom{f_1, \dots, f_n}{s_1, \dots, s_n}$
in terms of $L\binom{f_1, \dots, f_n}{s_1, \dots, s_n},$
as well as a theorem stating the converse expression.

\begin{theo}\label{thI}
Let $N$ be a positive integer, and let $f_j \in M_{k_j}(\Gamma_0(N), \chi_j)$ for $j = 1, \dots, n$, with $s \in \mathbb{C}$ and $\alpha_2, \dots, \alpha_n \in \mathbb{Z}_{>0}$. Then the following holds:
\begin{align*}
&I_{i\infty}^{0} \binom{f_1, \dots, f_n}{s, \alpha_2, \dots, \alpha_n} \notag \\
&= \sum_{l=1}^{n} \sum_{\substack{n_{l+1}' = n+1 \\ n_1, \dots, n_{l+1} \ge 0}} \Gamma^{(s, \alpha_2, \dots, \alpha_{n_l'-1}, \alpha_{n_l',n})} A_{n_1, \dots, n_l} B_{n_{l+1}}(s, \alpha_2, \dots, \alpha_n) \notag \\
&\quad \times \sum_{\substack{0 \le j_r < \alpha_r + j_{r+1} \\ (2 \le r \le n_l') \\ j_{n_l'+1}':=0}} \binom{s + j_2 - 1}{j_2}  \prod_{k=3}^{n_l'} \binom{\alpha_{k-1} + j_k - 1}{j_k} \notag \\
&\quad \times L\binom{f_{n_1'}, \dots, f_{n_l'}}{s + \alpha_{2,n_1'} + j_{n_1'+1}, \boldsymbol{\alpha}_{n_1,\dots,n_l}(\mathbb{J}), \alpha_{n_{l-1}'+1,n} - j_{n_{l-1}'+1}},
\end{align*}
where
\begin{align*}
\boldsymbol{\alpha}_{n_1,\dots,n_l}(\mathbb{J}) 
&= (\alpha_{n_1'+1, n_2'} - j_{n_1'+1} + j_{n_2'+1}, \dots, \alpha_{n_{l-2}'+1, n_{l-1}'} - j_{n_{l-2}'+1} + j_{n_{l-1}'+1}), \\
n_j' &= n_1 + \dots + n_j + j, \\
\alpha_{n,m} &= \alpha_n + \alpha_{n+1} + \dots + \alpha_m, \\
\Gamma^{(s_1, \dots, s_n)} &= (-1)^n \Gamma(s_1) \dots \Gamma(s_n), \\
A_{n_1,\dots,n_l} &= \frac{\prod_{i=1}^n a_0^{(i)}}{a_0^{n_1'} \cdots a_0^{n_l'}}, \\
B_m(s_1, \dots, s_n) &=\prod_{k=1}^{m}\frac{1}{s_n+\dots+s_{n-k-1}}
\end{align*}
\end{theo}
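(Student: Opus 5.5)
Write each $f_i=a_0^{(i)}+f_i^{0}$ with $f_i^0=\sum_{m\ge1}a_m^{(i)}q^m$, $q=e^{2\pi i z}$. Expanding $I_{i\infty}^{0}\binom{f_1,\dots,f_n}{s,\alpha_2,\dots,\alpha_n}$ multilinearly gives $2^n$ terms, one for each choice of constant or cuspidal part in every slot.

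\emph{Reduction to a few classes of words.} Each term is an iterated integral of the shape in \cref{lem2}: blocks $\{1\}^{n_1},f^0_{n_1'},\dots,\{1\}^{n_l},f^0_{n_l'},\{1\}^{n_{l+1}}$, with coefficient $\prod_{i\neq n_j'}a_0^{(i)}=A_{n_1,\dots,n_l}$. We work first in the region $\mathrm{Re}(s_n+\dots+s_{n-k})<0$ and afterwards use the meromorphic continuation established above.
\begin{enumerate}
\item Integrate out the innermost constant block with \cref{lem1}, as in the proof of \cref{lem2}. This produces the factor $B_{n_{l+1}}$ and merges the exponents $\alpha_{n_l'},\dots,\alpha_n$ into $\alpha_{n_l',n}$ on $f^0_{n_l'}$.
\item Term-by-term integration of $e^{2\pi i m z}$ against powers of $z$ is elementary, so each interior block $\{1\}^{n_j}$ can also be absorbed. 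Inductively, the innermost integral $\int_{i\infty}^{z}z'^{\beta-1}e^{2\pi i M z'}dz'$ with $\beta\in\mathbb{Z}_{>0}$ equals $e^{2\pi i M z}$ times a finite sum $\sum_{j}\binom{\beta-1}{j}\dots z^{\beta-1-j}(2\pi i M)^{-j-1}$ (repeated integration by parts).
\item The integrality of $\alpha_2,\dots,\alpha_n$ is essential here: it makes these sums terminate, and it produces the summation indices $0\le j_r<\alpha_r+j_{r+1}$ together with the binomials $\binom{\alpha_{k-1}+j_k-1}{j_k}$.
\item Passing through a constant $1$ with exponent $\alpha$ just shifts the power of $z$. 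The frequencies $M=m_{n_j'}+\dots+m_{n_l'}$ accumulate into the partial sums that appear in the definition of $L$.
\end{enumerate}

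\emph{Outermost integral.} After these steps the integrand is $\sum a^{(n_1')}_{m}\cdots\, z^{s+\alpha_{2,n_1'}+j-1}e^{2\pi i M z}$. Integrating from $i\infty$ to $0$ uses the Mellin integral $\int_{i\infty}^0 z^{w-1}e^{2\pi i M z}dz=-\Gamma(w)(-2\pi i M)^{-w}$. When the first slot is itself constant ($n_1>0$), the same mechanism applies after first integrating the leading $1$'s; this is how the $\binom{s+j_2-1}{j_2}$ factor arises.

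\emph{Assembly.} Collecting the Gamma factors gives $\Gamma^{(s,\alpha_2,\dots,\alpha_{n_l'-1},\alpha_{n_l',n})}$. The Gamma factors from the integration by parts, $\Gamma(\beta)/\Gamma(\beta-j)$, combine with the binomials. The resulting sum over $m$'s is exactly $L\binom{f_{n_1'},\dots,f_{n_l'}}{\cdots}$ with the stated shifted arguments. Summing over $l$ and $(n_1,\dots,n_{l+1})$ yields the formula. The $l=0$ term, the all-constant word, is $\int_{i\infty}^0$ of powers; by \cref{lem1} it is $b^{\cdots}$ evaluated at $b=0$ and vanishes in the convergent region, so it drops out. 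By uniqueness of meromorphic continuation, the identity extends from the convergence region to all $s$.

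\emph{Main obstacle.} The main obstacle is the bookkeeping in the induction of step 2. Each absorption of a constant block shifts the indices $j_r$ by the previous $j_{r+1}$, and we must check that the nested finite sums telescope exactly into the ranges $j_r<\alpha_r+j_{r+1}$ and the arguments $\alpha_{n_{i-1}'+1,n_i'}-j_{n_{i-1}'+1}+j_{n_i'+1}$. I would first do $n=2$ explicitly to fix the normalisations, including the $(-2\pi i)$ powers and the signs in $\Gamma^{(\cdot)}$, and then run the induction on $n$ by peeling off the outermost variable.
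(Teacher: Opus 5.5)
Your proposal is correct and follows the paper's proof. The multilinear split with Lemma~\ref{lem1} absorbing the trailing constant block is Proposition~\ref{prop1}. Your repeated integration by parts is the same computation the paper packages as the re-expansion $z_1^{\alpha-1}=\sum_{j}\binom{\alpha-1}{j}z^{j}(z_1-z)^{\alpha-1-j}$ (Corollary~\ref{col2}), followed by the $q$-expansion of $\tilde{F}$ (Proposition~\ref{hurie}). Your outer Mellin step with the Gamma--binomial recombination is Proposition~\ref{prop3} together with the paper's closing identity.

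Two small corrections. First, the factor $\binom{s+j_2-1}{j_2}$ arises for every word, not specifically when $n_1>0$: it comes from $\Gamma(s+j_2)=j_2!\,\Gamma(s)\binom{s+j_2-1}{j_2}$ in the outer Mellin transform. Second, the all-constant word drops out because the continued expression $b^{s+\alpha_{2,n}}/(\alpha_n(\alpha_n+\alpha_{n-1})\cdots(\alpha_{2,n}+s))$ from Lemma~\ref{lem1} tends to $0$ as $b\to 0$ once $\mathrm{Re}(s)$ is large. In Lemma~\ref{lem1}'s own region it blows up instead, so your phrase ``vanishes in the convergent region'' only holds for the former region. The paper drops this term without comment.
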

\begin{cor}
Let $N$ be a positive integer, and let $f_j \in S_{k_j}(\Gamma_0(N), \chi_j)$ for $j = 1, \dots, n$, with $s \in \mathbb{C}$ and $\alpha_2, \dots, \alpha_n \in \mathbb{Z}_{>0}$. Then the following holds:
\begin{align*}
&I_{i\infty}^{0} \binom{f_1, \dots, f_n}{s, \alpha_2, \dots, \alpha_n} \notag \\
&= \Gamma^{(s,\alpha_2,\dots,\alpha_n)}\sum_{\substack{0 \le j_r < \alpha_r + j_{r+1} \\ (2 \le r \le n) \\ j_{n+1}':=0}}\binom{s + j_2 - 1}{j_2}  \prod_{k=3}^{n_l'} \binom{\alpha_{k-1} + j_k - 1}{j_k}\\
&\times L\binom{f_1,\dots,f_n}{s+j_2,\alpha_2-j_2+j_3,\dots,\alpha_n-j_n+j_{n+1}}
\end{align*}
\end{cor}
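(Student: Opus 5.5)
The corollary should follow by specializing \Cref{thI} to cusp forms. For $f_j \in S_{k_j}(\Gamma_0(N),\chi_j)$ every constant term $a_0^{(j)}$ vanishes, so I will examine which terms of the double sum over $l$ and $(n_1,\dots,n_{l+1})$ survive.

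The first step is to read the coefficient $A_{n_1,\dots,n_l}$ as the product of the constant terms $a_0^{(i)}$ over the indices $i\notin\{n_1',\dots,n_l'\}$. This is what the quotient notation means: the $a_0$ of the forms that were replaced by their constant terms, i.e.\ the $\{1\}$-blocks in the decomposition $f=f^0+a_0$ used in \cref{lem2}. Since every $a_0^{(i)}=0$, $A_{n_1,\dots,n_l}$ vanishes unless the index set is all of $\{1,\dots,n\}$. That forces $l=n$ and $n_1=\dots=n_{l+1}=0$, so $n_j'=j$.

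Next I substitute this single surviving term. Here $B_0=1$, being an empty product. The gamma factor becomes $\Gamma^{(s,\alpha_2,\dots,\alpha_n)}$, because $\alpha_{n_l',n}=\alpha_{n,n}=\alpha_n$. In the $L$-function, the first argument $s+\alpha_{2,n_1'}+j_{n_1'+1}$ becomes $s+j_2$, since $\alpha_{2,1}$ is the empty sum $0$. Each middle entry $\alpha_{n_{r-1}'+1,n_r'}-j_{n_{r-1}'+1}+j_{n_r'+1}$ becomes $\alpha_r-j_r+j_{r+1}$, and the last entry is $\alpha_n-j_n+j_{n+1}$ with $j_{n+1}=0$. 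The summation range and the binomial factors carry over unchanged, with $n_l'=n$. This gives exactly the stated formula.

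The main obstacle is purely one of conventions. I need to confirm that $A$ is indeed the product of the omitted constant terms and carries no $a_0$ in the denominator that could cancel against a zero. I also need to confirm that the empty-sum and empty-product conventions at the boundaries $l=n$, $n_j=0$ are consistent, in particular the endpoint conventions $n_0'=0$ and $n_{n+1}'=n+1$. As an independent sanity check, the cusp case can be derived directly. Expand the innermost integral $\int_{i\infty}^{z}f_n(w)w^{\alpha_n-1}\,dw$ by repeated integration by parts termwise in $q$-expansions, which is possible since $\alpha_n\in\mathbb{Z}_{>0}$. This produces the binomial sums in the $j_r$. One then proceeds outward and finally applies the Mellin transform $\int_{i\infty}^0 e^{2\pi i m z}z^{s+j-1}\,dz$, which yields the $\Gamma$ factors and the $(-2\pi i)^{-\cdot}$ normalization. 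This reproduces Choie--Ihara's formula \cite{i}.
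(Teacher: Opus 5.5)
Your proposal is correct and is essentially the paper's proof: the paper likewise notes that for cusp forms $A_{n_1,\dots,n_l}=0$ whenever $l\neq n$, so only the term with $l=n$ and all $n_i=0$ survives in Theorem~\ref{thI}. Your explicit substitution ($A=B_0=1$ as empty products, $\alpha_{2,1}=0$, $n_j'=j$) and your direct integration-by-parts check just fill in details that the paper leaves implicit.
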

\begin{proof}
Because \( f_i \) is a cusp form, it follows that \( A_{n_1,\dots,n_l} = 0 \) for all
\( l \neq n \). Therefore, the sum runs only over the case
\( l = n \). In this case, we necessarily have \( n_i = 0 \) for all
\( i \), which completes the proof.
\end{proof}
\begin{theo}\label{thS}
Let $N$ be a positive integer, and let $f_j \in M_{k_j}(\Gamma_0(N), \chi_j)$ for $j = 1, \dots, n$, with $s \in \mathbb{C}$ and $\alpha_2, \dots, \alpha_n \in \mathbb{Z}_{>0}$. Then $L\binom{f_1, \dots, f_n}{s, \alpha_2, \dots, \alpha_n}$
can be expressed as a linear combination of iterated modular integrals.
\end{theo}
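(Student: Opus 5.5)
The plan is to invert the triangular relation of \Cref{thI} by induction on the depth $n$. For each depth we order the multiple modular $L$-values appearing in \Cref{thI} and show that the expression for $I_{i\infty}^{0}\binom{f_1,\dots,f_n}{s,\alpha_2,\dots,\alpha_n}$ contains exactly one ``leading'' $L$-term of full depth $n$. All other terms either have smaller depth $l<n$, or have depth $n$ but a strictly smaller ``shift vector'' $(j_2,\dots,j_n)$.

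First isolate the depth-$n$ contribution. In \Cref{thI}, the terms with $l=n$ force $n_1=\dots=n_n=0$. So $A=1$, $B_0=1$, and these terms are exactly the cusp-form-type sum of the Corollary:
\[
\Gamma^{(s,\alpha_2,\dots,\alpha_n)}\sum_{\mathbb{J}} c(\mathbb{J})\, L\binom{f_1,\dots,f_n}{s+j_2,\alpha_2-j_2+j_3,\dots,\alpha_n-j_n}.
\]
Here $c(\mathbb{J})$ denotes the product of binomial coefficients. The term $\mathbb{J}=0$ has coefficient $1$ and equals $L\binom{f_1,\dots,f_n}{s,\alpha_2,\dots,\alpha_n}$ times $\Gamma(s)\prod\Gamma(\alpha_r)$, which is nonzero for generic $s$ (interpret by meromorphic continuation in $s$). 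The terms with $l<n$ involve $L$-functions of a subsequence $f_{n_1'},\dots,f_{n_l'}$ of length $l<n$, with first argument $s+(\text{integer})$ and remaining arguments positive integers. The positivity check is $\alpha_{a,b}-j_a+j_{b+1}\ge 1$, which follows from the constraints $j_r<\alpha_r+j_{r+1}$. Hence these have exactly the shape of \Cref{thS} in lower depth, and by the induction hypothesis each is a linear combination of modular iterated integrals. The base case $n=1$ is the classical formula
\[
I_{i\infty}^0\binom{f}{s}=-\Gamma(s)L\binom{f}{s}-a_0/s,
\]
and the constant $a_0/s$ is itself a multiple of $I_{i\infty}^{i/\sqrt N}\binom{1}{s}$-type quantities. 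Alternatively, we adopt the convention that constants count as depth-$0$ iterated integrals.

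Next handle the depth-$n$ terms with $\mathbb{J}\neq0$. The constraints give $0\le j_r<\alpha_r+j_{r+1}$ with $j_{n+1}=0$, so the shift vectors range over a finite set. Order them by, say, $\sum_r j_r$, or lexicographically from $j_n$ downward. The key observation is this: a term with shift $\mathbb{J}$ is an $L$-value of depth $n$ with arguments $(s',\alpha_2',\dots,\alpha_n')=(s+j_2,\alpha_2-j_2+j_3,\dots)$, where $\alpha'_r\ge1$ are positive integers. Applying \Cref{thI} to $I_{i\infty}^0\binom{f_1,\dots,f_n}{s',\alpha_2',\dots,\alpha_n'}$ expresses it as that $L$-value plus lower-depth terms plus depth-$n$ terms with further shifts. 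The composite shifts have strictly larger total shift $\sum j$, and the total is bounded because each $j_r<\alpha_{r,n}$ (the original $\alpha$'s). Thus we get a finite unitriangular linear system: the vector of $I$-values indexed by admissible shifts equals a unitriangular matrix (up to nonzero $\Gamma$-factors) times the vector of depth-$n$ $L$-values, plus lower-depth terms. Inverting it expresses $L\binom{f_1,\dots,f_n}{s,\alpha_2,\dots,\alpha_n}$ as a linear combination of modular iterated integrals of the same shape, with coefficients rational in $s$ and Gamma factors. Combined with the inductive treatment of lower-depth terms, this proves \Cref{thS}.

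The main obstacle is verifying that the shift relation really closes up into a finite triangular system, i.e.\ that iterating shifts stays inside a bounded set of admissible index vectors with positive integer lower arguments. I would first try the explicit ordering by $\sum_{r} j_r$ together with the bound $\sum j\le \alpha_{2,n}-(n-1)$. If that fails, I would follow Choie--Ihara's approach: instead of inverting, directly expand $(m_1+\dots+m_n)^{-s}$ through the inverse Mellin transform $\Gamma(s)^{-1}\int_0^\infty t^{s-1}e^{-mt}dt$ and integrate by parts in the inner integer-exponent variables. This writes $L$ directly as iterated integrals with exponents $(s-j,\dots)$, and the constant terms are subtracted via \Cref{lem1} and \Cref{lem2}.
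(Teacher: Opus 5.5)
Your argument is correct, but it is organized differently from the paper's.

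\textbf{The paper's route.} The paper never inverts Theorem~\ref{thI}. It applies Proposition~\ref{prop3} to the cuspidal parts $f_i^0=f_i-a_0^{(i)}$. It then uses part (2) of the proposition relating $F$ and $\tilde F$, which is the reverse binomial expansion of $(z_1-z)^{\alpha-1}$. This gives the inverse relation directly:
\[
\Gamma^{(s,\alpha_2,\dots,\alpha_n)}L\binom{f_1,\dots,f_n}{s,\alpha_2,\dots,\alpha_n}
=\sum_{\substack{0\le j_r<\alpha_r\\ (2\le r\le n),\ j_{n+1}:=0}}\prod_{k=2}^{n}(-1)^{j_k}\binom{\alpha_k-1}{j_k}\,
I_{i\infty}^{0}\binom{f_1^0,\dots,f_n^0}{s+j_2,\alpha_2-j_2+j_3,\dots,\alpha_n-j_n+j_{n+1}}.
\]
Only then does it expand $f_i^0=f_i-a_0^{(i)}$ multilinearly. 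The single full-depth term is a modular iterated integral of $f_1,\dots,f_n$. Every term containing a constant reduces to multiple $L$-functions of depth $<n$, which are handled by induction.

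\textbf{Your route.} You use Theorem~\ref{thI} as a black box and invert it. The step you flagged as the main obstacle does go through. Label a depth-$n$ term $L\binom{f_1,\dots,f_n}{s+J_2,\beta_2,\dots,\beta_n}$ by its cumulative shift $J_k=\alpha_{k,n}-\beta_{k,n}$, i.e.\ the drop in the tail sums.
\begin{itemize}
\item Applying Theorem~\ref{thI} at shift $\mathbb{J}$ produces exactly the shifts $\mathbb{J}+\mathbb{J}'$.
\item These again satisfy $0\le j_r<\alpha_r+j_{r+1}$, since $j'_r<\beta_r+j'_{r+1}$ with $\beta_r=\alpha_r-j_r+j_{r+1}$.
\item So your system is indexed by the same finite set as the $\mathbb{J}$-sum in Theorem~\ref{thI}, and it is unitriangular for the componentwise order.
\end{itemize}
Your bound $\sum j\le \alpha_{2,n}-(n-1)$ actually bounds only $j_2$; the total $\sum_r j_r$ can exceed it. Finiteness of the index set is all you need, so this does not matter.

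\textbf{What each route buys.} Yours needs only the forward formula plus a soft triangularity argument. The paper's route writes the inverse in closed form with no inversion step. Up to the $\Gamma$-normalisation, the signed binomial coefficients above are exactly what your triangular inversion produces for the full-depth block. Both give, for example,
\[
\Gamma^{(s,3)}L\binom{f_1,f_2}{s,3}\equiv I_{i\infty}^0\binom{f_1,f_2}{s,3}-2I_{i\infty}^0\binom{f_1,f_2}{s+1,2}+I_{i\infty}^0\binom{f_1,f_2}{s+2,1}
\]
modulo depth-one terms.

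\textbf{Correction to your base case.} In the paper's meromorphic continuation, you split at $b=i/\sqrt N$ and continue each half. The constant term then contributes $a_0\bigl(b^s/s-b^s/s\bigr)=0$. Theorem~\ref{thI} at $n=1$ reads $I_{i\infty}^0\binom{f}{s}=\Gamma^{(s)}L\binom{f}{s}$. So the term $-a_0/s$ is spurious, and you need neither $I_{i\infty}^{i/\sqrt N}$-type terms nor a depth-$0$ convention.
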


\begin{cor}
The function $L\binom{f_1, \dots, f_n}{s, \alpha_2, \dots, \alpha_n}$
admits a meromorphic continuation to $\mathbb{C}$.
\end{cor}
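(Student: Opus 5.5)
The plan is to invert the formula of \Cref{thI} by induction on the depth $n$, using a triangularity built into the right-hand side. In \Cref{thI} the iterated integral $I_{i\infty}^{0}\binom{f_1,\dots,f_n}{s,\alpha_2,\dots,\alpha_n}$ is written as a sum of two kinds of terms. The first kind comes from $l=n$ (all $n_i=0$): these are depth-$n$ $L$-functions $L\binom{f_1,\dots,f_n}{s+j_2,\alpha_2-j_2+j_3,\dots,\alpha_n-j_n}$. The second kind comes from $l<n$: these are $L$-functions of strictly smaller depth $l$, built from sub-tuples of $(f_1,\dots,f_n)$ and multiplied by constant terms $a_0^{(i)}$, Gamma factors and the rational factors $B_{n_{l+1}}$.

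\textbf{Step 1 (inverting the depth-$n$ part).} Treat the depth-$n$ part with the second arguments held fixed. The index set $\{(j_2,\dots,j_n)\}$ with $0\le j_r<\alpha_r+j_{r+1}$ is finite. The term with all $j_r=0$ is exactly $\Gamma^{(s,\alpha_2,\dots,\alpha_n)}L\binom{f_1,\dots,f_n}{s,\alpha_2,\dots,\alpha_n}$, with binomial coefficients equal to $1$. Every other term has arguments $(s+j_2,\alpha_2-j_2+j_3,\dots)$, whose integer entries $\alpha_r-j_r+j_{r+1}$ are positive. Moreover, the $\mathbb{Z}_{>0}$-tail $(\alpha_2,\dots,\alpha_n)$ moves to a tuple that is smaller in a suitable order. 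I would use the order on $(\alpha_n,\alpha_{n-1}+\alpha_n,\dots)$: the partial sums from the right satisfy $\alpha_{r,n}-j_r\le\alpha_{r,n}$, with strict inequality somewhere when $j\neq0$.

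So for fixed total weight the depth-$n$ contributions form a unitriangular system, after dividing by $\Gamma^{(s,\alpha_2,\dots,\alpha_n)}$, which is nonzero away from the poles of $\Gamma(s)$. I would then solve it by downward induction on this finite poset. Each depth-$n$ $L$-value $L\binom{f_1,\dots,f_n}{s',\beta_2,\dots,\beta_n}$ that appears has $s'=s+j$ and $\beta\in\mathbb{Z}_{>0}^{n-1}$, so it is of the same admissible shape. Hence each can be expressed through the integrals $I_{i\infty}^{0}\binom{f_1,\dots,f_n}{s',\beta}$ plus lower-depth $L$-values.

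\textbf{Step 2 (lower-depth terms).} The terms with $l<n$ are $L$-functions of depth $l<n$ in modular forms $f_{n_1'},\dots,f_{n_l'}$ drawn from the original list. Their first argument is $s+(\text{integer})$ and their remaining arguments $\alpha_{\cdot,\cdot}-j+j$ are integers; I would check from the constraints on $j$ that these are positive. By the induction hypothesis on depth (with base case $n=1$, where \Cref{thI} reduces to $I_{i\infty}^0\binom{f}{s}=-\Gamma(s)L\binom{f}{s}$ plus a constant-term contribution $a_0/s$ from $B$), each is a linear combination of modular iterated integrals. The coefficients are rational functions in $s$ times Gamma values and constants.

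\textbf{Step 3 (assembly).} Combining Steps 1 and 2 expresses $L\binom{f_1,\dots,f_n}{s,\alpha_2,\dots,\alpha_n}$ as a finite linear combination of modular iterated integrals. The coefficients are meromorphic in $s$, namely ratios of Gamma functions, binomials $\binom{s+j-1}{j}$, and the factors $B$. The corollary on meromorphic continuation then follows from the continuation theorem for $I_{i\infty}^{0}$.

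The main obstacle is Step 1: verifying that the substitution $(j_r)$ really is triangular, i.e.\ that the partial order strictly decreases and the diagonal coefficient is $1$. Equally, one must check that the positivity $\alpha_r-j_r+j_{r+1}\ge1$ keeps every new $L$-value in the class covered by \Cref{thI}, including the shifted first argument, so that the recursion closes up. If the order on right partial sums fails, I would instead induct on the total $\alpha_{2,n}$ combined with lexicographic order on $(j_n,j_{n-1},\dots)$.
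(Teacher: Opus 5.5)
Your argument is correct. Its last step, expressing $L$ through modular iterated integrals and then invoking the continuation theorem, is exactly how the paper obtains the corollary: it reads it off Theorem~\ref{thS}, which you could also simply have cited.

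You differ in how that expression is produced:
\begin{itemize}
\item \textbf{Your route.} You use Theorem~\ref{thI} as a black box and solve it as a unitriangular system. The $j=0$ term carries $\Gamma^{(s,\alpha_2,\dots,\alpha_n)}$ with binomial coefficient $1$. Every other depth-$n$ term has right partial sums $\alpha_{r,n}-j_r$, strictly smaller somewhere. The $l<n$ terms are absorbed by induction on depth.
\item \textbf{The paper's route.} Its proof of Theorem~\ref{thS} inverts at the level of integrands. By Proposition~\ref{prop3}, $\Gamma^{(s,\alpha_2,\dots,\alpha_n)}L\binom{f_1,\dots,f_n}{s,\alpha_2,\dots,\alpha_n}$ is the Mellin transform of $\tilde F$ built from the cuspidal parts $f_i^0$. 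Part (2) of the proposition relating $F$ and $\tilde F$ converts $\tilde F$ into $F$ in closed form, with coefficients $\prod_k(-1)^{j_k}\binom{\alpha_k-1}{j_k}$ over $0\le j_r<\alpha_r$. Expanding $f_i^0=f_i-a_0^{(i)}$ then leaves the full modular iterated integral plus terms containing constants, which drop to depth $<n$ and are handled by induction on $n$.
\end{itemize}

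The paper's route gives an explicit, non-recursive inverse at top depth and needs no well-founded ordering, but it runs the $F$/$\tilde F$ machinery a second time. Your route needs only the shape of Theorem~\ref{thI}: unit diagonal, strictly decreasing right partial sums, positivity $\alpha_r-j_r+j_{r+1}\ge 1$ of the shifted entries, and sub-tuples with first argument $s+\mathbb{Z}_{\ge 0}$ at lower depth. All of these do hold, but your coefficients come out only recursively. On the cuspidal part the two routes must agree, since parts (1) and (2) of that proposition are mutually inverse. Indeed, the paper's worked example for Theorem~\ref{thS} ($\alpha_2=2$) is exactly one elimination step of your scheme applied to the example for Theorem~\ref{thI}.

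Some small corrections:
\begin{itemize}
\item For $n=1$, Theorem~\ref{thI} has $n_1=n_2=0$ and $B_0=1$, so there is no $a_0/s$ term. The all-constant term of Proposition~\ref{prop1} does not appear there. The base case is just $I_{i\infty}^0\binom{f}{s}=-\Gamma(s)L\binom{f}{s}$, and Hecke's theorem would suffice anyway.
\item $\Gamma$ has no zeros, so $1/\Gamma^{(s,\alpha_2,\dots,\alpha_n)}$ is entire and the division is harmless everywhere.
\item A point you share with the paper: the continuation theorem gives a meromorphic function on $\mathbb{C}^n$, and both arguments restrict it to lines $s\mapsto(s+c,\beta_2,\dots,\beta_m)$ with $\beta_i\in\mathbb{Z}_{>0}$. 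That is legitimate only if such a line is not contained in the polar locus. The divisor list as printed includes $s_2=k_2$, so a coincidence like $\beta_2=k_2$ deserves a sentence.
\end{itemize}
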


\begin{ex}
In Theorem~\ref{thI}, when $n = 2$ and $\alpha_2 = 2$, we obtain:
\begin{align*}
I_{i\infty}^{0} \binom{f_1, f_2}{s, 2}
&= \Gamma(s+1) L\binom{f_1, f_2}{s+1, 1} + \Gamma(s) L\binom{f_1, f_2}{s, 2} - \frac{a_0^{(2)} \Gamma(s+2)}{2} L\binom{f_1}{s+2} \notag \\
&\quad + a_0^{(1)} \Gamma(s+1) L\binom{f_2}{s+2} + a_0^{(1)} \Gamma(s) L\binom{f_2}{s+2}.
\end{align*}
\end{ex}

\begin{ex}
In Theorem~\ref{thS}, the expression becomes:
\begin{align*}
\Gamma^{(s, 2)} L\binom{f_1, f_2}{s, 2}
&= I_{i\infty}^{0} \binom{f_1, f_2}{s, 2} - I_{i\infty}^{0} \binom{f_1, f_2}{s+1, 1}
+ \frac{a_0^{(2)}}{2} I_{i\infty}^{0} \binom{f_1}{s+2}
+ \frac{a_0^{(1)}}{s(s+1)} I_{i\infty}^{0} \binom{f_2}{s+2}.
\end{align*}
\end{ex}

Below, we write
\[
F_a^z\binom{f_1, f_2, \dots, f_n}{-, s_2, \dots, s_n}
:= f_1(z) I_a^z\binom{f_2, \dots, f_n}{s_2, \dots, s_n}.
\]
Although the proof of the theorem is based on integration by parts, the above definition is not convenient for direct computation. In order to handle constant terms more effectively, we introduce an alternative, computationally favorable formulation.

\begin{dfn}
Let $\alpha_1, \dots, \alpha_n$ be integers and $a \in \mathfrak{H}$. We define the holomorphic functions $\tilde{I}_a^z$ and $\tilde{F}_a^z$ by
\begin{align*}
\tilde{I}_a^z \binom{f_1, \dots, f_n}{\alpha_1, \dots, \alpha_n}
&:= \int_a^z f_1(z_1) (z_1 - z)^{\alpha_1} \frac{dz_1}{z_1 - z}
\int_a^{z_1} \cdots \int_a^{z_{n-1}} f_n(z_n) (z_n - z_{n-1})^{\alpha_n} \frac{dz_n}{z_n - z_{n-1}}, \\
\tilde{F}_a^z \binom{f_1, \dots, f_n}{-, \alpha_2, \dots, \alpha_n}
&:= f_1(z) \tilde{I}_a^z \binom{f_2, \dots, f_n}{\alpha_2, \dots, \alpha_n}.
\end{align*}
\end{dfn}

This definition is suitable for performing integration by parts. However, due to the presence of constant terms, direct computation is not straightforward. Therefore, we must first extract the contribution of the constant terms.

\begin{prop}\label{prop1}
Let $f_i \in M_{k_i}(\Gamma_0(N), \chi_i)$. Then the following identity holds:
\begin{align*}
&I_{i\infty}^{z} \binom{f_1, \dots, f_n}{s_1, \dots, s_n} \notag \\
&= \sum_{l=1}^{n} \sum_{\substack{n_{l+1}' = n+1 \\ n_1, \dots, n_{l+1} \ge 0}} 
A_{n_1, \dots, n_l} \, B_{n_{l+1}}(s_1, \dots, s_n) \,
I_{i\infty}^{z} \binom{\{1\}^{n_1}, f_{n_1'}^0, \{1\}^{n_2}, f_{n_2'}^0, \dots, \{1\}^{n_l}, f_{n_l'}^0}
{s_1, \dots, s_{n_l'-1}, s_{n_l'} + \dots + s_n} \notag \\
&\quad + a_0^{(1)} a_0^{(2)} \cdots a_0^{(n)} \,
I_{i\infty}^z \binom{\{1\}^n}{s_1, \dots, s_n}.
\end{align*}
Here, we define:
\begin{align*}
A_{n_1, \dots, n_l} &= \frac{\prod_{i=1}^n a_0^{(i)}}{a_0^{(n_1')} a_0^{(n_2')} \cdots a_0^{(n_l')}}, 
\quad n_j' := n_1 + \dots + n_j + j, \\
B_m(s_1, \dots, s_n) &=
\begin{cases}
1 & \text{if } m = 0, \\
\frac{1}{s_n (s_n + s_{n-1}) \cdots (s_n + \dots + s_{n - m + 1})} & \text{if } m \neq 0.
\end{cases}
\end{align*}
\end{prop}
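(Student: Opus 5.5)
The plan is to write each $f_i = f_i^0 + a_0^{(i)}$, where $f_i^0$ is the part of the $q$-expansion without constant term, and expand the iterated integral multilinearly. Since $I_{i\infty}^{z}$ is linear in each slot, we get
\[
I_{i\infty}^{z}\binom{f_1,\dots,f_n}{s_1,\dots,s_n}=\sum_{S\subseteq\{1,\dots,n\}}\Bigl(\prod_{i\notin S}a_0^{(i)}\Bigr)\, I_{i\infty}^{z}\binom{g^S_1,\dots,g^S_n}{s_1,\dots,s_n}.
\]
Here $g^S_i=f_i^0$ if $i\in S$ and $g^S_i=1$ otherwise. The term $S=\emptyset$ gives exactly $a_0^{(1)}\cdots a_0^{(n)}\, I_{i\infty}^z\binom{\{1\}^n}{s_1,\dots,s_n}$, the last term of the statement.

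For $S\neq\emptyset$, I write $S=\{n_1'<n_2'<\dots<n_l'\}$. This defines $n_1,\dots,n_l\ge0$ as the lengths of the runs of $1$'s before each $f^0$, and $n_{l+1}=n-n_l'\ge 0$ as the length of the trailing run, so that $n_{l+1}'=n+1$. Summing over $S\ne\emptyset$ is then the same as summing over $l=1,\dots,n$ and over such compositions. The coefficient $\prod_{i\notin S}a_0^{(i)}$ is $\prod_i a_0^{(i)}/(a_0^{(n_1')}\cdots a_0^{(n_l')})$. This is $A_{n_1,\dots,n_l}$, read as the formal product with those factors omitted, so that nothing goes wrong when some $a_0^{(n_j')}=0$.

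The remaining step is to collapse the trailing block of $n_{l+1}$ constants, exactly as in the proof of \cref{lem2}. For $\mathrm{Re}$ of the partial sums $s_n,\ s_n+s_{n-1},\dots$ negative, the innermost integral is
\[
\int_{i\infty}^{z_{n_l'}} z_{n_l'+1}^{s_{n_l'+1}}\frac{dz_{n_l'+1}}{z_{n_l'+1}}\cdots\int_{i\infty}^{z_{n-1}} z_n^{s_n}\frac{dz_n}{z_n}.
\]
By \cref{lem1} it equals $z_{n_l'}^{s_{n_l'+1}+\dots+s_n}B_{n_{l+1}}(s_1,\dots,s_n)$. Absorbing the power $z_{n_l'}^{s_{n_l'+1}+\dots+s_n}$ into the $n_l'$-th integrand turns the exponent there into $s_{n_l'}+\dots+s_n$. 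This yields $B_{n_{l+1}}\,I_{i\infty}^{z}\binom{\{1\}^{n_1},f^0_{n_1'},\dots,\{1\}^{n_l},f^0_{n_l'}}{s_1,\dots,s_{n_l'-1},s_{n_l'}+\dots+s_n}$. When $n_{l+1}=0$ there is nothing to collapse and $B_0=1$.

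The main obstacle is convergence and analytic continuation. The earlier runs of $1$'s sit outside some $f^0$, and $f^0$ decays exponentially at $i\infty$, so those integrals converge for all $s$. Only the trailing block and the pure-constant term need the half-space condition on the partial sums. I would therefore prove the identity on the open region where all of $\mathrm{Re}(s_n),\dots,\mathrm{Re}(s_n+\dots+s_1)$ are negative, and then extend it to all of $\mathbb{C}^n$ by uniqueness of meromorphic continuation, using \cref{lem1} and \cref{lem2}. Both sides are meromorphic in $(s_1,\dots,s_n)$ for fixed $z\in\mathbb{H}$, since the paths of integration lie in $\mathbb{H}$ and the exponent shift is holomorphic, so the extension is legitimate.
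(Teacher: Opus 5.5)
Your proposal is correct and follows essentially the same route as the paper: you expand each $f_i=f_i^0+a_0^{(i)}$ multilinearly, match the nonempty sets of slots carrying $f^0$ with the compositions $(n_1,\dots,n_{l+1})$, and collapse the trailing run of constants via Lemma~\ref{lem1}, which produces $B_{n_{l+1}}$ and the merged exponent $s_{n_l'}+\dots+s_n$. Two of your additions make explicit what the paper's two-line proof leaves implicit: reading $A_{n_1,\dots,n_l}$ as the product of the omitted constant terms (so it still makes sense when some $a_0^{(n_j')}=0$), and proving the identity first on the region where $\mathrm{Re}(s_n),\dots,\mathrm{Re}(s_n+\dots+s_1)<0$ before continuing meromorphically.
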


\begin{proof}
Writing each $f_i$ as $f_i = f_i^0 + a_0^{(i)}$ and applying multilinearity, we decompose $I_{i\infty}^z$ as follows:
\begin{align*}
&\sum_{l=1}^{n} \sum_{\substack{n_1 + \dots + n_{l+1} = n - l \\ n_1, \dots, n_{l+1} \ge 0}} 
A_{n_1, \dots, n_{l}} \,
I_{i\infty}^z \binom{\{1\}^{n_1}, f_{n_1'}^0, \dots, \{1\}^{n_l}, f_{n_l'}^0, \{1\}^{n_{l+1}}}{s_1, \dots, s_n} \notag \\
&\quad + a_0^{(1)} a_0^{(2)} \cdots a_0^{(n)} 
I_{i\infty}^z \binom{\{1\}^n}{s_1, \dots, s_n}.
\end{align*}
The final integral involving $\{1\}^{n_{l+1}}$ can be computed using \cref{lem1}, which yields the desired formula.
\end{proof}

This proposition allows us to successfully isolate the contributions from constant terms. From here, it suffices to rewrite $F$ in terms of $\tilde{F}$.

\begin{prop}
Let $f_i$ $(i = 1, \dots, n,\ n > 1)$ be holomorphic functions on the upper half-plane $\mathbb{H}$. Then the following holds:
\begin{align*}
(1)\quad
&F_{a}^{z}\binom{f_1,\dots,f_n}{-,\alpha_2,\dots,\alpha_n} \\
&= \sum_{\substack{0 \le j_r < \alpha_r + j_{r+1} \\ (2 \le r \le n) \\ j_{n+1} := 0}} \prod_{k=2}^{n} \binom{\alpha_k + j_{k+1} - 1}{j_k} z^{j_2} 
\tilde{F}_{a}^{z}\binom{f_1,\dots,f_n}{-,\alpha_2 - j_2 + j_3, \dots, \alpha_n - j_n + j_{n+1}}, \\
(2)\quad
&\tilde{F}_{a}^{z}\binom{f_1,\dots,f_n}{-,\alpha_2,\dots,\alpha_n} \\
&= \sum_{\substack{0 \le j_r < \alpha_r \\ (2 \le r \le n) \\ j_{n+1} := 0}} \prod_{k=2}^{n} (-1)^{j_k} \binom{\alpha_k - 1}{j_k} z^{j_2}
F_{a}^{z}\binom{f_1,\dots,f_n}{-,\alpha_2 - j_2 + j_3, \dots, \alpha_n - j_n + j_{n+1}}.
\end{align*}
\end{prop}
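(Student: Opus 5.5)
The plan is to prove both identities by binomial expansion directly inside the nested integrands; no integration by parts and no induction on $n$ is needed. Write $z_1:=z$. With this convention,
\[
F_a^z\binom{f_1,\dots,f_n}{-,\alpha_2,\dots,\alpha_n}=f_1(z)\int_a^{z} f_2(z_2)z_2^{\alpha_2-1}dz_2\int_a^{z_2}\cdots\int_a^{z_{n-1}} f_n(z_n)z_n^{\alpha_n-1}dz_n .
\]
The function $\tilde F_a^z$ is the same nested integral with each $z_k^{\alpha_k-1}$ replaced by $(z_k-z_{k-1})^{\alpha_k-1}$ for $2\le k\le n$. All the $\alpha_k$ are positive integers and $a\in\mathbb{H}$, so every integrand is a polynomial in the variables times holomorphic functions. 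Every integral converges, and we may freely expand the integrands and use linearity of the iterated integral.

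For (2), I expand each factor as
\[
(z_k-z_{k-1})^{\alpha_k-1}=\sum_{j_k=0}^{\alpha_k-1}(-1)^{j_k}\binom{\alpha_k-1}{j_k}z_k^{\alpha_k-1-j_k}z_{k-1}^{j_k}.
\]
The factor $z_{k-1}^{j_k}$ can be moved one level out, where it multiplies $z_{k-1}^{\alpha_{k-1}-1-j_{k-1}}$. After doing this for all $k$, the exponent of $z_k$ in the $k$-th integrand ($2\le k\le n$) is $\alpha_k-1-j_k+j_{k+1}$, with $j_{n+1}=0$. This is exactly the integrand of $F_a^z$ with parameters $\alpha_k-j_k+j_{k+1}$. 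The leftover factor $z_1^{j_2}=z^{j_2}$ does not depend on any integration variable, so it comes out in front. Collecting the signs and binomial coefficients gives (2), with ranges $0\le j_k<\alpha_k$.

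For (1) the expansion is the inverse one, but it must be carried out from the innermost integral outward, because each step changes the exponent that the next step expands. First write $z_n^{\alpha_n-1}=\bigl((z_n-z_{n-1})+z_{n-1}\bigr)^{\alpha_n-1}$ and expand it, producing $\binom{\alpha_n-1}{j_n}(z_n-z_{n-1})^{\alpha_n-1-j_n}z_{n-1}^{j_n}$. Move $z_{n-1}^{j_n}$ outward, so that the $(n-1)$-st integrand carries $z_{n-1}^{\alpha_{n-1}+j_n-1}$. Expand this as $\bigl((z_{n-1}-z_{n-2})+z_{n-2}\bigr)^{\alpha_{n-1}+j_n-1}$, which gives the coefficient $\binom{\alpha_{n-1}+j_n-1}{j_{n-1}}$ and ranges $0\le j_{n-1}<\alpha_{n-1}+j_n$. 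Continue down to $k=2$. There the factor $z_1^{j_2}=z^{j_2}$ is produced and pulled out of the outermost integral. The exponent of $(z_k-z_{k-1})$ in the $k$-th integrand is $\alpha_k+j_{k+1}-1-j_k$, which matches the parameter $\alpha_k-j_k+j_{k+1}$ of $\tilde F$. The coefficient is $\prod_k\binom{\alpha_k+j_{k+1}-1}{j_k}$, as claimed.

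The main obstacle is not analytic but combinatorial: verifying that the shifted exponents, the binomial coefficients, and the summation ranges $0\le j_r<\alpha_r+j_{r+1}$ (respectively $j_r<\alpha_r$ in (2)) match the statement exactly. In (1) in particular, the upper range of $j_{k-1}$ depends on the already-chosen $j_k$, and this is what forces the inside-out order of expansion. To keep the bookkeeping transparent I would first carry out the case $n=2$ explicitly: there one gets $z_2^{\alpha_2-1}=\sum_{j_2}\binom{\alpha_2-1}{j_2}(z_2-z)^{\alpha_2-1-j_2}z^{j_2}$. I would then do the general step $k$ under the hypothesis that the integrand at level $k$ is $z_k^{\alpha_k+j_{k+1}-1}$, and check that all exponents remain nonnegative integers, so that every binomial expansion is finite.
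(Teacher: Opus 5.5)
Your proof is correct and is essentially the paper's argument. The paper proves (1) by induction on $n$: it writes $F_a^z\binom{f_1,\dots,f_n}{-,\alpha_2,\dots,\alpha_n}=f_1(z)\int_a^z z_2^{\alpha_2-1}F_a^{z_2}\binom{f_2,\dots,f_n}{-,\alpha_3,\dots,\alpha_n}\,dz_2$, applies the hypothesis to the inner factor, and then (implicitly) expands $z_2^{\alpha_2+j_3-1}$ about $z$; this is exactly your inside-out expansion written recursively, and (2) is obtained, as in your proposal, by direct binomial expansion, with your version additionally spelling out the final re-expansion step and index ranges that the paper leaves implicit.
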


\begin{proof}
We prove (1) by induction on $n$. When $n=2$,
\begin{align*}
F_{a}^{z}\binom{f_1, f_2}{-, \alpha_2}
&= f_1(z) \int_{a}^{z} f_2(z_1) z_1^{\alpha_2 - 1} \, dz_1 \\
&= f_1(z) \int_{a}^{z} f_2(z_1) \left( \sum_{0 \le j_2 < \alpha_2} \binom{\alpha_2 - 1}{j_2} z^{j_2} (z_1 - z)^{\alpha_2 - j_2 - 1} \right) dz_1 \\
&= \sum_{0 \le j_2 < \alpha_2} \binom{\alpha_2 - 1}{j_2} z^{j_2} 
\tilde{F}_{a}^{z}\binom{f_1, f_2}{-, \alpha_2 - j_2}.
\end{align*}
Now assume the claim holds for $n - 1$, and consider the case $n > 2$:
\begin{align*}
F_{a}^{z} \binom{f_1,\dots,f_n}{-,\alpha_2,\dots,\alpha_n}
&= f_1(z) \int_{a}^{z} z_2^{\alpha_2 - 1} 
F_{a}^{z_2} \binom{f_2,\dots,f_n}{-,\alpha_3,\dots,\alpha_n} \, dz_2 \notag \\
&= f_1(z) \int_{a}^{z} z_2^{\alpha_2 - 1}
\sum_{\substack{
  0 \le j_r < \alpha_r + j_{r+1} \\
  (3 \le r \le n),\; j_{n+1} := 0
}} 
\prod_{k=3}^{n} \binom{\alpha_k + j_{k+1} - 1}{j_k}
z_2^{j_3} \notag \\
&\quad \times 
\tilde{F}_{a}^{z_2} \binom{f_2,\dots,f_n}
{-, \alpha_3 - j_3 + j_4, \dots, \alpha_n - j_n + j_{n+1}} \, dz_2.
\end{align*}
This proves (1) by induction. Statement (2) follows directly from applying the binomial expansion.
\end{proof}

\begin{cor}\label{col2}
When $n_l' > 1$, the following holds:
\begin{align*}
&F_{a}^{z}\binom{\{1\}^{n_1}, f_{n_1'}^{0}, \{1\}^{n_2}, f_{n_2'}^{0}, \dots, \{1\}^{n_l}, f_{n_l'}^{0}}{-, \alpha_2, \dots, \alpha_{n_l' ,n}} \notag \\
&= \sum_{\substack{0 \le j_r < \alpha_r + j_{r+1} \\ (2 \le r \le n_l')}} \binom{\alpha_{n_l',n-1}}{j_{n_l'}}\prod_{k=2}^{n_l'-1} \binom{\alpha_k + j_{k+1} - 1}{j_k} z^{j_2} \tilde{F}_{a}^{z}
\binom{\{1\}^{n_1}, f_{n_1'}^{0}, \{1\}^{n_2}, f_{n_2'}^{0}, \dots, \{1\}^{n_l}, f_{n_l'}^{0}}{-, \alpha_2 - j_2 + j_3, \dots, \alpha_{n_l'} - j_{n_l'} + j_{n_l'+1}}, \notag \\
&\tilde{F}_{a}^{z}\binom{\{1\}^{n_1}, f_{n_1'}^{0}, \{1\}^{n_2}, f_{n_2'}^{0}, \dots, \{1\}^{n_l}, f_{n_l'}^{0}}{-, \alpha_2, \dots, \alpha_{n_l'}} \notag \\
&= \sum_{\substack{0 \le j_r < \alpha_r \\ (2 \le r \le n_l')}} \prod_{k=2}^{n_l'} (-1)^{j_k} \binom{\alpha_k - 1}{j_k}
F_{a}^{z}\binom{\{1\}^{n_1}, f_{n_1'}^{0}, \{1\}^{n_2}, f_{n_2'}^{0}, \dots, \{1\}^{n_l}, f_{n_l'}^{0}}{-, \alpha_2 - j_2 + j_3, \dots, \alpha_{n_l'} - j_{n_l'} + j_{n_l'+1}}. \notag
\end{align*}
\end{cor}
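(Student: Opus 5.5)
The corollary should be the preceding Proposition applied to one specific tuple of functions. The only new feature is that the last exponent is the collapsed sum $\alpha_{n_l',n}$ coming from Proposition~\ref{prop1}. I will take the tuple $(g_1,\dots,g_{n_l'}) := (\{1\}^{n_1}, f_{n_1'}^0, \dots, \{1\}^{n_l}, f_{n_l'}^0)$. Each $g_i$ is holomorphic on $\mathbb{H}$, and $n_l'>1$ guarantees that the Proposition applies. The exponent vector is $(-,\beta_2,\dots,\beta_{n_l'})$ with $\beta_r=\alpha_r$ for $2\le r<n_l'$ and $\beta_{n_l'}=\alpha_{n_l',n}$.

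For the first identity, I apply part (1) of the Proposition to $(g_i)$ with exponents $\beta_r$. This gives a sum over $0\le j_r<\beta_r+j_{r+1}$ with $j_{n_l'+1}=0$ and coefficient $\prod_{k=2}^{n_l'}\binom{\beta_k+j_{k+1}-1}{j_k}$. For $k<n_l'$ the factor is $\binom{\alpha_k+j_{k+1}-1}{j_k}$. At the last index $k=n_l'$ we have $j_{n_l'+1}=0$, so the factor is $\binom{\alpha_{n_l',n}-1}{j_{n_l'}}$. I will check this against the displayed $\binom{\alpha_{n_l',n-1}}{j_{n_l'}}$. That expression is the same only if the index notation is read so that it reduces to the top entry $\beta_{n_l'}-1$; otherwise it is a typographical slip, and the proof will record the coefficient as $\binom{\beta_{n_l'}-1}{j_{n_l'}}$. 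The arguments of $\tilde F$ are $\beta_r-j_r+j_{r+1}$, which matches the stated form.

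For the second identity, I apply part (2) directly to the same tuple $(g_i)$ with generic integer exponents $\alpha_2,\dots,\alpha_{n_l'}$. The display is then a verbatim instance of part (2).

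The one point needing care is that the Proposition's proof never used anything about the $f_i$ beyond holomorphy. The base case is a binomial expansion of $z_1^{\alpha-1}=((z_1-z)+z)^{\alpha-1}$, and the induction step substitutes inside the integral. So constants $1$ and the cuspidal parts $f^0$ are admissible inputs. Convergence with the base point $a$ is not an issue for $a\in\mathbb{H}$, since every object is a finite iterated integral of holomorphic functions.

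I expect the main obstacle to be the bookkeeping at the last slot. The endpoint exponent is the aggregated $\alpha_{n_l',n}$ rather than a single $\alpha$, and $j_{n_l'+1}$ is set to $0$. The proof should say explicitly that these are the conventions under which the stated product is read. Apart from that, the corollary follows immediately from the Proposition.
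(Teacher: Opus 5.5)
Your argument is the intended one. The paper gives no separate proof: the corollary is just the preceding Proposition applied to the tuple $(\{1\}^{n_1},f^0_{n_1'},\dots,\{1\}^{n_l},f^0_{n_l'})$ with last exponent $\alpha_{n_l',n}$, and the coefficient you derive, $\binom{\alpha_{n_l',n}-1}{j_{n_l'}}$, is the one the paper itself uses in the proof of Theorem~\ref{thI}.

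One correction to your bookkeeping: the aggregated exponent must also be read into the range $0\le j_{n_l'}<\alpha_{n_l',n}$ and into the last argument $\alpha_{n_l',n}-j_{n_l'}$ of $\tilde F$, as the paper writes in that proof. With $j_{n_l'+1}=0$, the displayed $\alpha_{n_l'}-j_{n_l'}+j_{n_l'+1}$ is not literally what the Proposition gives, so your claim that the arguments of $\tilde F$ "match the stated form" needs this reinterpretation.
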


To obtain the main theorem, we compute $\tilde{F}$ directly by integration by parts.

\begin{prop}\label{hurie}
The following Fourier series expansion holds: \\
(1)
\begin{align*}
&\tilde{I}_{i\infty}^{z} \binom{\{1\}^{n_1}, f_{n_1'}^{0}, \{1\}^{n_2}, f_{n_2'}^{0}, \dots, \{1\}^{n_l}, f_{n_l'}^{0}}{\alpha_1, \alpha_2, \dots, \alpha_{n_l'}} \notag \\
&= \frac{\Gamma^{(\alpha_1, \dots, \alpha_{n_l'})}}{(-2\pi i)^{\alpha_1 + \dots + \alpha_{n_l'}}} \notag \\
&\quad \times \sum_{m_1, \dots, m_l > 0} \frac{a_{m_1}^{(n_1')} \cdots a_{m_l}^{(n_l')}}{(m_1 + \dots + m_l)^{\alpha_{1,n_1'}} (m_2 + \dots + m_l)^{\alpha_{n_1'+1, n_2'}} \cdots m_l^{\alpha_{n_{l-1}'+1, n_l'}}} q^{m_1 + \dots + m_l}. \notag
\end{align*}
(2)
\begin{align*}
&\tilde{F}_{i\infty}^{z} \binom{\{1\}^{n_1}, f_{n_1'}^{0}, \{1\}^{n_2}, f_{n_2'}^{0}, \dots, \{1\}^{n_l}, f_{n_l'}^{0}}{-, \alpha_2, \dots, \alpha_{n_l'}} \notag \\
&= \frac{\Gamma^{(\alpha_2, \dots, \alpha_{n_l'})}}{(-2\pi i)^{\alpha_2 + \dots + \alpha_{n_l'}}} \notag \\
&\quad \times \sum_{m_1, \dots, m_l > 0} \frac{a_{m_1}^{(n_1')} \cdots a_{m_l}^{(n_l')}}{(m_1 + \dots + m_l)^{\alpha_{2,n_1'}} (m_2 + \dots + m_l)^{\alpha_{n_1'+1, n_2'}} \cdots m_l^{\alpha_{n_{l-1}'+1, n_l'}}} q^{m_1 + \dots + m_l}. \notag
\end{align*}
Here, $\alpha_{n,m} = \alpha_n + \dots + \alpha_m$. In particular, both $\tilde{I}_{i\infty}^{z}$ and $\tilde{F}_{i\infty}^{z}$ are periodic with period $1$.
\end{prop}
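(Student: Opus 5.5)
The plan is to prove (1) by induction on the length $n_l'$ of the word $\{1\}^{n_1}, f_{n_1'}^{0}, \dots, \{1\}^{n_l}, f_{n_l'}^{0}$, peeling off the \emph{innermost} integration. I then deduce (2) from (1) by one further multiplication. Everything rests on a single elementary integral. Let $g(w)=\sum_{M>0} c_M e^{2\pi i M w}$ be a $q$-series with only positive frequencies that converges absolutely and uniformly on $\mathrm{Im}\, w \ge y_0>0$, and let $\alpha\in\mathbb{Z}_{>0}$. Then
\begin{align*}
\int_{i\infty}^{u} g(w)\,(w-u)^{\alpha-1}\,dw
= -\Gamma(\alpha)\sum_{M>0}\frac{c_M}{(-2\pi i M)^{\alpha}}\,e^{2\pi i M u}.
\end{align*}
To see this, substitute $w=u+t$ termwise and use $\int_0^{i\infty} e^{2\pi i M t} t^{\alpha-1}\,dt=\Gamma(\alpha)/(-2\pi i M)^{\alpha}$; the reversed orientation supplies the sign. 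This sign is exactly the factor $-1$ per variable built into $\Gamma^{(\alpha_1,\dots)}=(-1)^{n}\prod\Gamma(\alpha_i)$. Interchanging sum and integral is justified because every frequency is $\ge 1$: along the vertical ray the integrand is bounded by $|t|^{\alpha-1}e^{-2\pi\,\mathrm{Im}\,t}\sum |c_M| e^{-2\pi M\,\mathrm{Im}\,u}$, which is integrable.

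For the induction, write $z_0:=z$ and define $G_k(z_{k-1})$ as the portion of $\tilde I$ consisting of the integrations over $z_k,\dots,z_{n_l'}$. The kernel $(z_k-z_{k-1})^{\alpha_k-1}$ involves only the immediately outer variable, so $G_k$ is genuinely a function of $z_{k-1}$ alone.

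The claim to carry through the induction is that $G_k(u)$ is a $q$-series in $e^{2\pi i u}$ of the following shape. It is indexed by the frequencies $m_j$ of those $f^0$'s lying at positions $\ge k$. Each term carries the factor $\prod_{i\ge k}\bigl(-\Gamma(\alpha_i)\bigr)\,(-2\pi i)^{-\sum_{i\ge k}\alpha_i}$. Its denominator is $\prod_j(m_j+\dots+m_l)^{(\text{sum of }\alpha_i\text{ over the positions }i\ge k\text{ in the }j\text{-th block})}$. Its $q$-power is $e^{2\pi i(m_{j_0}+\dots+m_l)u}$, where $m_{j_0}$ is the first $f^0$ frequency at position $\ge k$.

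The base case is $k=n_l'$. The innermost entry is $f^0_{n_l'}$, which has only positive frequencies, so the elementary integral gives $-\Gamma(\alpha_{n_l'})\sum_{m_l} a^{(n_l')}_{m_l}(-2\pi i m_l)^{-\alpha_{n_l'}} e^{2\pi i m_l u}$.

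In the inductive step, the integrand at level $k-1$ is $f_{k-1}(z_{k-1})\,G_k(z_{k-1})\,(z_{k-1}-z_{k-2})^{\alpha_{k-1}-1}$. There are two cases.
\begin{itemize}
\item If $f_{k-1}=1$, the series $G_k$ is fed directly into the elementary integral. The current total frequency $M=m_j+\dots+m_l$ then gets its exponent raised by $\alpha_{k-1}$, which is how the block sums $\alpha_{n_{j-1}'+1,n_j'}$ accumulate.
\item If $f_{k-1}=f^0_{n_{j}'}$, I first multiply the two $q$-series. This introduces a new index $m_j$ and shifts the frequency to $m_j+M$. The elementary integral then contributes $(m_j+\dots+m_l)^{-\alpha_{k-1}}$, which opens the next block.
\end{itemize}
At $k=1$ the outer variable is $z_0=z$. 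This yields exactly (1), with the first exponent $\alpha_{1,n_1'}$ on $(m_1+\dots+m_l)$.

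For (2), write $\tilde F_{i\infty}^{z}$ as the first entry evaluated at $z$ times $\tilde I_{i\infty}^{z}$ of the remaining word, and split into two cases.
\begin{itemize}
\item If $n_1\ge1$, the first entry is $1$. Then (1), applied to the shorter word with exponents $\alpha_2,\dots$, gives the formula directly, with first exponent $\alpha_{2,n_1'}$.
\item If $n_1=0$, the first entry is $f^0_1(z)=\sum a^{(1)}_{m_1}q^{m_1}$. Multiplying by the series from (1) adds the index $m_1$ and contributes the factor $(m_1+\dots+m_l)^{-\alpha_{2,1}}$ with $\alpha_{2,1}=0$ (empty sum), consistent with the stated formula.
\end{itemize}
Periodicity with period $1$ is then immediate, since both expressions are power series in $q=e^{2\pi i z}$.

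I expect the main obstacle to be not the analysis but the bookkeeping. The index conventions ($n_j'$, block sums $\alpha_{n_{j-1}'+1,n_j'}$, and the count of $(-1)$'s) must come out exactly as stated, and I would fix them by writing the inductive hypothesis explicitly in terms of the suffix of the word starting at position $k$. The analytic points are absolute convergence of the multiple series and legitimacy of the repeated interchange. Both follow uniformly from the fact that the word ends with some $f^0$, so that every frequency appearing is $\ge 1$. Termwise integration also gives holomorphy in $z$, which is the only regularity the statement needs.
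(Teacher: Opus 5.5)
Your proposal is correct and follows essentially the paper's proof. The paper also inducts on the length of the word, treats the inner (suffix) integral $\tilde{I}_{i\infty}^{z_1}$ as a $q$-series via the induction hypothesis, splits into the cases $n_1=0$ (outer entry $f^0_1$, multiply the series first) and $n_1\neq 0$ (outer entry $1$), evaluates the outer integral termwise via $\int_{i\infty}^{z} e^{2\pi i M z_1}(z_1-z)^{\alpha_1-1}\,dz_1=\Gamma^{(\alpha_1)}q^{M}/(-2\pi i M)^{\alpha_1}$, and then deduces (2) from (1). Beyond this, you supply the interchange/convergence justification and the empty-sum convention $\alpha_{2,1}=0$ in (2) when $n_1=0$, both of which the paper leaves implicit.
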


\begin{proof}
We prove (1) by induction on the length of the iterated integral. For length $1$,
\begin{align*}
\tilde{I}_{i\infty}^{z} \binom{f_1^0}{\alpha_1}
&= \sum_{m_1 > 0} a_{m_1}^{(1)} \int_{i\infty}^{z} e^{2\pi i m_1 z_1} (z_1 - z)^{\alpha_1 - 1} \, dz_1 \notag \\
&= \sum_{m_1 > 0} a_{m_1}^{(1)} \frac{\Gamma^{(\alpha_1)}}{(-2\pi i m_1)^{\alpha_1}} q^{m_1}, \notag
\end{align*}
which holds. Now assume the claim holds for lengths $\le n$. If $n_1 = 0$,
\begin{align*}
&\tilde{I}_{i\infty}^{z} \binom{f_1^0, \{1\}^{n_2}, f_{n_2'}, \dots, \{1\}^{n_l}, f_{n_l'}^0}{\alpha_1, \dots, \alpha_{n_l'}} \notag \\
&= \int_{i\infty}^{z} f_1^0(z_1)(z_1 - z)^{\alpha_1 - 1} dz_1 \cdot I_{i\infty}^{z_1} \binom{\{1\}^{n_2}, f_{n_2'}, \dots, \{1\}^{n_l}, f_{n_l'}^0}{\alpha_2, \dots, \alpha_{n_l'}} \notag \\
&= \frac{\Gamma^{(\alpha_2, \dots, \alpha_{n_l'})}}{(-2\pi i)^{\alpha_2 + \dots + \alpha_{n_l'}}}
\sum_{m_1, \dots, m_l > 0} \frac{a_{m_1}^{(n_1')} \cdots a_{m_l}^{(n_l')}}{(m_2 + \dots + m_l)^{\alpha_{2,n_2'}} \cdots m_l^{\alpha_{n_{l-1}'+1, n_l'}}} \notag \\
&\quad \times \int_{i\infty}^{z} e^{2\pi i (m_1 + \dots + m_l) z_1} (z_1 - z)^{\alpha_1 - 1} \, dz_1 \notag \\
&= \frac{\Gamma^{(\alpha_1, \dots, \alpha_{n_l'})}}{(-2\pi i)^{\alpha_1 + \dots + \alpha_{n_l'}}}
\sum_{m_1, \dots, m_l > 0} \frac{a_{m_1}^{(n_1')} \cdots a_{m_l}^{(n_l')}}{(m_1 + \dots + m_l)^{\alpha_{1,n_1'}} \cdots m_l^{\alpha_{n_{l-1}'+1, n_l'}}} q^{m_1 + \dots + m_l}, \notag
\end{align*}
which proves the case $n_1 = 0$. For $n_1 \ne 0$,
\begin{align*}
&\tilde{I}_{i\infty}^{z} \binom{\{1\}^{n_1}, f_{n_1'}^{0}, \{1\}^{n_2}, f_{n_2'}^{0}, \dots, \{1\}^{n_l}, f_{n_l'}^{0}}{\alpha_1, \alpha_2, \dots, \alpha_{n_l'}} \notag \\
&= \int_{i\infty}^{z} (z_1 - z)^{\alpha_1 - 1} I_{i\infty}^{z_1} \binom{\{1\}^{n_1 - 1}, f_{n_1'}, \dots, \{1\}^{n_l}, f_{n_l'}^0}{\alpha_2, \dots, \alpha_{n_l'}} \notag \\
&= \frac{\Gamma^{(\alpha_2, \dots, \alpha_{n_l'})}}{(-2\pi i)^{\alpha_2 + \dots + \alpha_{n_l'}}}
\sum_{m_1, \dots, m_l > 0} \frac{a_{m_1}^{(n_1')} \cdots a_{m_l}^{(n_l')}}{(m_1 + \dots + m_l)^{\alpha_{2,n_1'}} \cdots m_l^{\alpha_{n_{l-1}'+1, n_l'}}} \notag \\
&\quad \times \int_{i\infty}^{z} e^{2\pi i (m_1 + \dots + m_l) z_1} (z_1 - z)^{\alpha_1 - 1} \, dz_1 \notag \\
&= \frac{\Gamma^{(\alpha_1, \dots, \alpha_{n_l'})}}{(-2\pi i)^{\alpha_1 + \dots + \alpha_{n_l'}}}
\sum_{m_1, \dots, m_l > 0} \frac{a_{m_1}^{(n_1')} \cdots a_{m_l}^{(n_l')}}{(m_1 + \dots + m_l)^{\alpha_{1,n_1'}} \cdots m_l^{\alpha_{n_{l-1}'+1, n_l'}}} q^{m_1 + \dots + m_l}, \notag
\end{align*}
which completes the proof of (1). Statement (2) follows from (1).
\end{proof}

\begin{prop}\label{prop3}
The following Mellin transforms hold:
\begin{align*}
&\int_{i\infty}^{0} \tilde{I}_{i\infty}^{z} \binom{\{1\}^{n_1}, f_{n_1'}^{0}, \{1\}^{n_2}, f_{n_2'}^{0}, \dots, \{1\}^{n_l}, f_{n_l'}^{0}}{\alpha_1, \alpha_2, \dots, \alpha_{n_l'}} z^s \frac{dz}{z} \notag \\
&= \Gamma^{(s, \alpha_1, \dots, \alpha_{n_l'})} \cdot L\binom{f_{n_1'}, \dots, f_{n_l'}}{s + \alpha_{1,n_1'}, \alpha_{n_1'+1,n_2'}, \dots, \alpha_{n_{l-1}'+1,n_l'}}, \notag
\end{align*}
\begin{align*}
&\int_{i\infty}^{0} \tilde{F}_{i\infty}^{z} \binom{\{1\}^{n_1}, f_{n_1'}^{0}, \{1\}^{n_2}, f_{n_2'}^{0}, \dots, \{1\}^{n_l}, f_{n_l'}^{0}}{-, \alpha_2, \dots, \alpha_{n_l'}} z^s \frac{dz}{z} \notag \\
&= \Gamma^{(s, \alpha_2, \dots, \alpha_{n_l'})} \cdot L\binom{f_{n_1'}, \dots, f_{n_l'}}{s + \alpha_{2,n_1'}, \alpha_{n_1'+1,n_2'}, \dots, \alpha_{n_{l-1}'+1,n_l'}}. \notag
\end{align*}
\end{prop}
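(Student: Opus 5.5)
The plan is to substitute the Fourier expansion of \cref{hurie}\,(1) into the Mellin integral and integrate term by term. Put $M := m_1 + \dots + m_l$ and $C_{\mathbf m} := a_{m_1}^{(n_1')} \cdots a_{m_l}^{(n_l')}$. By \cref{hurie}\,(1), the integrand equals
\[
\frac{\Gamma^{(\alpha_1,\dots,\alpha_{n_l'})}}{(-2\pi i)^{\alpha_1+\dots+\alpha_{n_l'}}}\sum_{m_1,\dots,m_l>0}\frac{C_{\mathbf m}\, e^{2\pi i M z}}{M^{\alpha_{1,n_1'}}(m_2+\dots+m_l)^{\alpha_{n_1'+1,n_2'}}\cdots m_l^{\alpha_{n_{l-1}'+1,n_l'}}}.
\]
So the whole problem reduces to the single one-variable integral $\int_{i\infty}^{0} e^{2\pi i M z} z^{s}\frac{dz}{z}$ for each $M>0$.

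To compute it, parametrize the imaginary axis by $z = it$. The path from $i\infty$ to $0$ is then $t$ running from $\infty$ to $0$. This gives
\[
\int_{i\infty}^{0} e^{2\pi i M z} z^{s}\frac{dz}{z} = -\,i^{s}\int_0^\infty e^{-2\pi M t} t^{s}\frac{dt}{t} = -\Gamma(s)\,(-2\pi i M)^{-s},
\]
where we use $i^s(2\pi M)^{-s} = (-2\pi i M)^{-s}$ for the principal branch, i.e.\ $\arg(-2\pi i M) = -\pi/2$. The sign $-1$ is exactly the extra factor that turns $\Gamma^{(\alpha_1,\dots,\alpha_{n_l'})}$ into $\Gamma^{(s,\alpha_1,\dots,\alpha_{n_l'})}$, recalling $\Gamma^{(s_1,\dots,s_n)} = (-1)^n\prod\Gamma(s_i)$. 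The factor $M^{-s}$ combines with $M^{-\alpha_{1,n_1'}}$ to give $M^{-(s+\alpha_{1,n_1'})}$. The powers of $-2\pi i$ collect to $(-2\pi i)^{-(s+\alpha_1+\dots+\alpha_{n_l'})}$, which matches the normalizing factor in the definition of $L\binom{f_{n_1'},\dots,f_{n_l'}}{s+\alpha_{1,n_1'},\alpha_{n_1'+1,n_2'},\dots}$. Here the sum of the arguments is $s+\alpha_{1,n_l'}$, so the two normalizations agree. Summing over $\mathbf m$ then reproduces the stated multiple modular $L$-function.

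The second identity is proved identically, starting from \cref{hurie}\,(2) in place of (1). The only changes are that $\alpha_1$ is absent and the first exponent becomes $\alpha_{2,n_1'}$. Alternatively, it follows from the first identity by noting that $\tilde F_{i\infty}^{z}$ has the Fourier expansion of $\tilde I$ with "$\alpha_1 = 0$" and no $\Gamma(\alpha_1)$ factor.

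The main obstacle is justifying the interchange of sum and integral. This needs absolute convergence of $\sum_{\mathbf m}|C_{\mathbf m}|\,M^{-\mathrm{Re}(s)-\alpha_{1,n_1'}}\cdots$, uniformly along the path. I would first establish it for $\mathrm{Re}(s)$ sufficiently large, using the polynomial bound $a_m^{(i)} = O(m^{k_i})$ for coefficients of modular forms. Under that bound, the nested sum is dominated by $\sum_{\mathbf m} M^{c - \mathrm{Re}(s)}$ for some constant $c$, which converges once $\mathrm{Re}(s) > c + l$. Fubini–Tonelli then applies on $t\in(0,\infty)$, since $\int_0^\infty e^{-2\pi Mt}t^{\mathrm{Re}(s)-1}\,dt<\infty$ for $\mathrm{Re}(s)>0$. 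The identity for general $s$ then follows by meromorphic continuation of both sides. The left side extends by the same splitting at $i/\sqrt N$ used in the meromorphic continuation theorem; at $z \to 0$ one needs the growth of $\tilde I$ to be polynomial in $1/|z|$, which comes from the modularity of the $f_{n_j'}$. The branch convention for $z^s$ on the imaginary axis must be fixed consistently with the definition of $L$.
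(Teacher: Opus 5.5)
Your proposal is correct and follows the paper's route: the paper's proof is the one-line remark that both identities follow from the Fourier expansions of Proposition~\ref{hurie}, and you supply the omitted details, namely the term-by-term Mellin transform $\int_{i\infty}^{0} e^{2\pi i M z} z^{s}\,\frac{dz}{z} = -\Gamma(s)(-2\pi i M)^{-s}$, the sign bookkeeping that turns $\Gamma^{(\alpha_1,\dots,\alpha_{n_l'})}$ into $\Gamma^{(s,\alpha_1,\dots,\alpha_{n_l'})}$, and a Fubini justification for $\mathrm{Re}(s)$ large. Your closing remarks on meromorphic continuation to all $s$ go beyond what the proposition needs when read in its region of convergence, and they are only sketched, but the main argument does not depend on them.
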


\begin{proof}
This follows immediately from Proposition~\ref{hurie}.
\end{proof}

Using these propositions, we now prove Theorem~\ref{thI}.

\begin{proof}
By applying Proposition~\ref{prop1}, Corollary~\ref{col2}, and Proposition~\ref{prop3}, we compute as follows:
\begin{align*}
I_{i\infty}^{0} &\binom{f_1, \dots, f_n}{s, \alpha_2, \dots, \alpha_n} \notag \\
&= \sum_{l=1}^{n} \sum_{\substack{n_{l+1}' = n+1 \\ n_1, \dots, n_{l+1} \ge 0}} A_{n_1, \dots, n_l} B_{n_{l+1}}(s, \alpha_2, \dots, \alpha_n) \notag \\
&\quad \times \int_{i\infty}^0F_{i\infty}^{z} \binom{\{1\}^{n_1}, f_{n_1'}^0, \dots, \{1\}^{n_l}, f_{n_l'}^0}{-, \alpha_2, \dots, \alpha_{n_l'-1}, \alpha_{n_l', n}} z^{s-1} dz \notag \\
&= \sum_{l=1}^{n} \sum_{\substack{n_{l+1}' = n+1 \\ n_1, \dots, n_{l+1} \ge 0}}^{\hspace{6mm}'} A_{n_1, \dots, n_l} B_{n_{l+1}}(s, \alpha_2, \dots, \alpha_n) \notag \\
&\quad \times \sum_{\substack{0 \le j_r < \alpha_r + j_{r+1} \\ (2 \le r \le n_l')}} \binom{\alpha_{n_l',n}-1}{j_{n_l}'}\prod_{k=2}^{n_l'-1} \binom{\alpha_k + j_{k+1} - 1}{j_k} \notag \\
&\quad \times \int_{i\infty}^0\tilde{F}_{i\infty}^{z} \binom{\{1\}^{n_1}, f_{n_1'}^0, \dots, \{1\}^{n_l}, f_{n_l'}^0}{-, \alpha_2 - j_2 + j_3, \dots, \alpha_{n_l'-1} - j_{n_l'-1} + j_{n_l}, \alpha_{n_l', n} - j_{n_l'}} z^{s + j_2 - 1} \notag \\
&= \sum_{l=1}^{n} \sum_{\substack{n_{l+1}' = n+1 \\ n_1, \dots, n_{l+1} \ge 0}}^{\hspace{6mm}'} A_{n_1, \dots, n_l} B_{n_{l+1}}(s, \alpha_2, \dots, \alpha_n) \notag \\
&\quad \times \sum_{\substack{0 \le j_r < \alpha_r + j_{r+1} \\ (2 \le r \le n_l')}} \binom{\alpha_{n_l',n}-1}{j_{n_l}'}\prod_{k=2}^{n_l'-1} \binom{\alpha_k + j_{k+1} - 1}{j_k} \Gamma^{(s + j_2, \alpha_2 - j_2 + j_3, \dots, \alpha_{n_l'-1} - j_{n_l'-1} + j_{n_l'}, \alpha_{n_l', n} - j_{n_l'})} \notag \\
&\quad \times L \binom{f_{n_1'}, \dots, f_{n_l'}}{s + \alpha_{2,n_1'} + j_{n_1'+1}, \boldsymbol{\alpha}_{n_1, \dots, n_l}(\mathbb{J}), \alpha_{n_{l-1}'+1,n} - j_{n_{l-1}'+1}}. \notag
\end{align*}
Finally, we use the identity
\begin{align*}
&\binom{\alpha_{n_l',n}-1}{j_{n_l}'}\prod_{k=2}^{n_l'-1} \binom{\alpha_k + j_{k+1} - 1}{j_k} \Gamma^{(s + j_2, \alpha_2 - j_2 + j_3, \dots, \alpha_{n_l'-1} - j_{n_l'-1} + j_{n_l'}, \alpha_{n_l', n} - j_{n_l'})} \notag \\
&= \Gamma^{(s, \alpha_2, \dots, \alpha_{n_l'-1}, \alpha_{n_l', n})} \binom{s + j_2 - 1}{j_2} \prod_{k=3}^{n_l'} \binom{\alpha_{k-1} + j_k - 1}{j_k}, \notag
\end{align*}
to complete the proof.
\end{proof}

 We now proceed to prove Theorem~\ref{thS}.
\begin{proof}
We prove the theorem by induction on $n$.  
For $n = 1$ is classical. 
Assume the statement holds for all natural numbers less than or equal to $n$, and consider the case $n > 1$.

By Proposition~\ref{prop3}, we have:
\begin{align*}
L\binom{f_1, \dots, f_n}{s, \alpha_2, \dots, \alpha_n}
&= \frac{1}{\Gamma^{(s, \alpha_2, \dots, \alpha_n)}}
\sum_{\substack{0 \le j_r < \alpha_r \\ (2 \le r < n)}}
\prod_{k=2}^{n} (-1)^{j_k} \binom{\alpha_k}{j_k} \notag \\
&\quad \times I_{i\infty}^{0}
\binom{f_1^0, \dots, f_n^0}{s + j_2, \alpha_2 - j_2 + j_3, \dots, \alpha_n - j_n + j_{n+1}}, \notag
\end{align*}
where $f^0 = f - a_0$. The statement reduces to a classical result.
The final integral can be expressed as a linear combination of iterated integrals of the form
\[
I_{i\infty}^0 \binom{\{1\}^{n_1}, f_{n_1'}, \dots, \{1\}^{n_l}, f_{n_l'}, \{1\}^{n_{l+1}}}
{s + j_2, \alpha_2 - j_2 + j_3, \dots, \alpha_n - j_n + j_{n+1}},
\]
where $1 \le l \le n$ and $n_{l+1}' = n + 1$.

When $l = n$, this becomes the original integral
\[
I_{i\infty}^{0} \binom{f_1, \dots, f_n}{s + j_2, \alpha_2 - j_2 + j_3, \dots, \alpha_n - j_n + j_{n+1}}.
\]
When $l < n$, it can be expressed as a linear combination of multiple $L$-functions of length $l$.
Therefore, by the induction hypothesis, the claim follows.
\end{proof}

\section{Special values of modular iterated integral}

Multiple zeta values can be expressed as iterated integrals on \(\mathbb{P}^1 \setminus \{0,1,\infty\}\).  
Moreover, there exists an isomorphism
\[
\lambda: Y_0(4) \xrightarrow{\sim} \mathbb{P}^1 \setminus \{0,1,\infty\}, \quad \lambda = \frac{16F}{G}.
\]
Using this isomorphism, we consider how multiple zeta values can be written as modular iterated integrals by pulling them back to \(Y_0(4)\).  
To compute the pullbacks of \(\omega_0 = \frac{dz}{z}\) and \(\omega_1 = \frac{dz}{1 - z}\), it is convenient to use the \(\eta\)-function, which we therefore define.

We define the following elements \( F, G \in M_2(\Gamma_0(4)) \):
\begin{align*}
F(z) &= -\frac{1}{24} \left( E_2(z) - 3E_2(2z) + 2E_2(4z) \right) 
= \sum_{\substack{n > 0 \\ n \text{ odd}}} \sigma_1(n) q^n, \\
G(z) &= \theta(z)^4,
\end{align*}
where 
\[
E_k(z)= 1 - \frac{2k}{B_k} \sum_{n=1}^\infty \sigma_{k-1}(n) q^n,
\qquad
\theta(z) = \sum_{n \in \mathbb{Z}} e^{2\pi i z n^2} = \sum_{n \in \mathbb{Z}} q^{n^2}.
\]
Moreover, \( F \) and \( G \) form a basis for \( M_2(\Gamma_0(4)) \).

\begin{dfn}
For a point \( z \) in the upper half-plane, the Dedekind \(\eta\)-function is defined by
\begin{align*}
\eta(z) = q^{\frac{1}{24}} \prod_{n=1}^{\infty} (1 - q^n),
\end{align*}
where \( q = e^{2\pi i z} \).
\end{dfn}

It is known that the Dedekind \(\eta\)-function satisfies the following proposition.

\begin{prop} \label{eta}
For any integer \( l \), the following identity holds:
\begin{align*}
\frac{d}{dz} \log(\eta(lz)) = \frac{2\pi i l}{24} E_2(lz),
\end{align*}
where \( E_2(z) \) denotes the Eisenstein series of weight 2.
\end{prop}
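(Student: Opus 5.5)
The plan is to differentiate the logarithm of the product expansion term by term and then recognize the resulting Lambert series as the $q$-expansion of $E_2$. First I would treat the case $l=1$, since the general case follows by the chain rule: if $\frac{d}{dz}\log\eta(z) = \frac{2\pi i}{24}E_2(z)$, then $\frac{d}{dz}\log\eta(lz) = l\cdot\frac{2\pi i}{24}E_2(lz)$, which is exactly the claimed identity. Here one should restrict to positive integers $l$, so that $lz\in\mathbb{H}$; for $l\le 0$ the expression $\eta(lz)$ is not defined on $\mathbb{H}$.

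For $l=1$, I would write $\log\eta(z) = \frac{2\pi i z}{24} + \sum_{n\ge1}\log(1-q^n)$ on $\mathbb{H}$. This is a legitimate branch choice because $|q|<1$, and the series converges absolutely and locally uniformly, so it may be differentiated term by term. Using $\frac{d}{dz}q^n = 2\pi i n q^n$, I get
\begin{align*}
\frac{d}{dz}\log\eta(z) = \frac{2\pi i}{24} - 2\pi i\sum_{n\ge1}\frac{n q^n}{1-q^n}.
\end{align*}

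Next I would expand $\frac{q^n}{1-q^n}=\sum_{m\ge1}q^{nm}$ and regroup by $N=nm$. Absolute convergence for $|q|<1$ justifies interchanging the order of summation, and it gives
\begin{align*}
\sum_{n\ge1}\frac{nq^n}{1-q^n} = \sum_{N\ge1}\sigma_1(N)q^N.
\end{align*}
Hence
\begin{align*}
\frac{d}{dz}\log\eta(z) = \frac{2\pi i}{24}\Bigl(1-24\sum_{N\ge1}\sigma_1(N)q^N\Bigr).
\end{align*}

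Finally I would compare this with the paper's normalization $E_k = 1-\frac{2k}{B_k}\sum\sigma_{k-1}(n)q^n$. With $k=2$ and $B_2=\frac16$ one has $\frac{2k}{B_k}=24$, so the bracket above is exactly $E_2(z)$, and the $l=1$ case is done.

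The only potentially delicate points are the justification of the term-by-term differentiation and the rearrangement of the double series, both of which are routine consequences of uniform convergence on compact subsets of $\mathbb{H}$. The one thing to check carefully is that the normalization constant of $E_2$ in the paper produces precisely the coefficient $24$.
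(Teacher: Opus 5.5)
Your proof is correct and complete. The branch choice is harmless, since only the logarithmic derivative $\eta'/\eta$ matters. Term-by-term differentiation and the regrouping $\sum_{n\ge1} nq^n/(1-q^n)=\sum_{N\ge1}\sigma_1(N)q^N$ are justified by locally uniform absolute convergence. With the paper's normalization, $2k/B_k=24$ for $k=2$, so the bracket is exactly $E_2$.

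The paper gives no proof of this proposition; it simply quotes it as known. Your argument is the standard derivation it implicitly relies on. Your restriction to positive $l$ is also right, since $\eta(lz)$ is undefined on $\mathbb{H}$ for $l\le 0$, and the paper only uses $l=1,2,4$.
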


\begin{prop}
For the functions \( F, G \) , the following identities hold:
\begin{align*}
F(z) &= \frac{\eta(4z)^8}{\eta(2z)^4}, \\
G(z) &= \frac{\eta(2z)^{20}}{\eta(z)^8 \eta(4z)^8}, \\
G(z) - 16F(z) &= \frac{\eta(z)^8}{\eta(2z)^4}.
\end{align*}
\end{prop}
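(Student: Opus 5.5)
The plan is to prove the three eta-quotient identities by the standard method: show each side is a modular form of weight $2$ on $\Gamma_0(4)$, then compare enough $q$-expansion coefficients to pin down the element of the two-dimensional space $M_2(\Gamma_0(4))$ spanned by $F$ and $G$.

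\textbf{Modularity of the quotients.} For an eta quotient $\prod_{\delta\mid 4}\eta(\delta z)^{r_\delta}$, the usual criterion (Gordon--Hughes--Newman) says it is a weakly modular form of weight $\frac12\sum r_\delta$ on $\Gamma_0(4)$ with trivial character when two conditions hold: $\sum \delta r_\delta \equiv 0$ and $\sum (4/\delta) r_\delta \equiv 0 \pmod{24}$, and $\prod \delta^{r_\delta}$ is a square. The three exponent vectors are:
\begin{itemize}
\item $(r_1,r_2,r_4)=(0,-4,8)$ for the first quotient;
\item $(-8,20,-8)$ for the second;
\item $(8,-4,0)$ for the third.
\end{itemize}
Each has weight $2$. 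In each case the congruences check directly; for example, for the second, $-8+40-32=0$ and $-32+40-8=0$. Holomorphy at the three cusps $\infty, 0, 1/2$ follows from the Ligozat order formula $\frac{N}{24}\sum_\delta \frac{\gcd(c,\delta)^2 r_\delta}{\gcd(c,N/c)\,c\,\delta}$, which I expect to give nonnegative orders.

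\textbf{Comparing coefficients.} Since $\dim M_2(\Gamma_0(4))=2$ (stated above), it suffices to compare the first two coefficients of each side.
\begin{itemize}
\item $\eta(4z)^8/\eta(2z)^4 = q\prod(1-q^{4n})^8/(1-q^{2n})^4 = q+O(q^3)$. This has no $q^2$ term, and the expansion is in odd powers only. It is therefore $aF+bG$ with constant term $0$, forcing $b=0$, and the $q$-coefficient gives $a=1$.
\item $\eta(2z)^{20}/(\eta(z)^8\eta(4z)^8)$ has leading term $q^{0}$ and expansion $1+8q+\dots$. This matches $\theta^4=1+8q+24q^2+\dots$, since the constant term gives $b=1$ and the $q$-coefficient then forces $a=0$.
\item $\eta(z)^8/\eta(2z)^4 = \prod(1-q^n)^8/(1-q^{2n})^4 = 1-8q+\dots$, which matches $G-16F=1+8q-16q+\dots=1-8q+\dots$.
\end{itemize}

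\textbf{Alternative route for the third identity.} One could instead derive it from the Jacobi identity $\theta(z)^4=\theta_{\text{even}}^4$ form. The cleanest approach is still dimension counting.

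\textbf{Main obstacle.} The main obstacle is verifying holomorphy at the cusps, especially for the second quotient, which has negative exponents. At the cusp $0$ ($c=1$) the order is $\frac{4}{24}(-8+20/2-8/4)=0$. At $c=2$ it is $\frac{4}{24}\cdot\frac{1}{2}(-8+20\cdot 4/4-8\cdot4/8)=\frac{1}{12}(-8+20-4)>0$. Both are nonnegative, so this is fine, and I would present these order computations explicitly.
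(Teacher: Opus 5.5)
The paper gives no proof of this proposition. It asserts the three eta-quotient identities as known facts, and likewise just asserts that $F,G$ form a basis of $M_2(\Gamma_0(4))$. Your argument is the standard way to supply the missing proof, and it is correct. You use Gordon--Hughes--Newman for the transformation law, Ligozat for holomorphy at the cusps, and then match the constant and $q$-coefficients against $F=q+\cdots$ and $G=1+8q+\cdots$. The two congruences and the square condition on $\prod\delta^{r_\delta}$ do hold for all three exponent vectors. The coefficient map $f\mapsto(a_0,a_1)$ is injective on $M_2(\Gamma_0(4))$, since $F\mapsto(0,1)$ and $G\mapsto(1,8)$ are independent. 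So two coefficients determine each quotient, and your matches $F$, $G$, $G-16F$ are right.

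Fix two things when you write it up.

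\emph{Cusp $1/2$.} Your order computation there is wrong. With $N=4$ and $d=2$ you have $\gcd(d,N/d)\,d=4$, so Ligozat's formula reduces to $\frac{1}{24}(r_1+2r_2+r_4)$. For $(-8,20,-8)$ this equals $1$, not $\frac23$. A non-integral order should have flagged the slip. Your conclusion of a nonnegative order survives.

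\emph{The other two quotients.} The first and third quotients also have a negative exponent, $r_2=-4$. They need the same explicit check, not ``I expect''. Their orders at $(\infty,0,\tfrac12)$ are $(1,0,0)$ and $(0,1,0)$, and the second quotient has $(0,0,1)$. These agree with the valence formula: total order $\tfrac{2}{12}\cdot 6=1$ for weight $2$ on $\Gamma_0(4)$, which has no elliptic points. That is a worthwhile sanity check to include.

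Drop the garbled aside about ``$\theta^4=\theta_{\text{even}}^4$''. The triple-product route gives $\theta=\eta(2z)^5/(\eta(z)^2\eta(4z)^2)$ and $\theta(z+\tfrac12)=\eta(z)^2/\eta(2z)$ directly. But identifying $\theta(z+\tfrac12)^4$ with $G-16F$ still needs $r_4(n)=8\sigma_1(n)$ for odd $n$. Your dimension argument is the more self-contained route.
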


We now compute the pullback of \(\omega_0 = \frac{dz}{z}\) via \(\lambda\), using \Cref{eta}:
\begin{align*}
\frac{d\lambda(z)}{\lambda(z)} 
&= d\log(\lambda(z)) \notag \\
&= d\log\left(16 \cdot \frac{F(z)}{G(z)}\right) \notag \\
&= d\log\left(\frac{\eta(4z)^{16} \eta(z)^8}{\eta(2z)^{24}}\right) \notag \\
&= 16 \, d\log \eta(4z) + 8 \, d\log \eta(z) - 24 \, d\log \eta(2z) \notag \\
&= \frac{2\pi i}{3} \left( E_2(z) + 8 E_2(4z) - 6 E_2(2z) \right) dz \notag \\
&= 2\pi i \left( G(z) - 16 F(z) \right) dz. 
\end{align*}
The final equality follows by comparing the coefficients in the \(q\)-expansions.  
Similarly, we compute the pullback of \(\omega_1 = \frac{dz}{1 - z}\) via \(\lambda\), again using \Cref{eta}:

\begin{align*}
\frac{d\lambda(z)}{1 - \lambda(z)} 
&= -d\log(1 - \lambda(z)) \notag \\
&= -d\log\left(1 - \frac{16F(z)}{G(z)}\right) \notag \\
&= -d\log\left(\frac{G(z) - 16F(z)}{G(z)}\right) \notag \\
&= -d\log\left(\frac{\eta(z)^{16} \eta(4z)^8}{\eta(2z)^{24}}\right) \notag \\
&= -\frac{4\pi i}{3} \left( E_2(z) + 2 E_2(4z) - 3 E_2(2z) \right) dz \notag \\
&= 32 \pi i F(z) dz. 
\end{align*}

We find that multiple zeta values can be expressed as iterated integrals on \(Y_0(4)\) in terms of the above pullbacks.

\begin{prop}
Let \( \zeta(k_1, \dots, k_d) \) be a multiple zeta value of weight \( w = k_1 + \dots + k_d \). Then the following identity holds:
\begin{align*}
\zeta(k_1, \dots, k_d)
&= \int_{\mathrm{dch}} \omega_1 \omega_0^{k_1 - 1} \cdots \omega_1 \omega_0^{k_d - 1} \notag \\
&= \left(2\pi i \right)^w 16^d \int_{\lambda^*(\mathrm{dch})}
\Big( F(z)\,dz \cdot \big( (G(z) - 16F(z))\,dz \big)^{k_1 - 1} \notag \\
&\hspace{6em} \cdots
F(z)\,dz \cdot \big( (G(z) - 16F(z))\,dz \big)^{k_d - 1} \Big).
\end{align*}
\end{prop}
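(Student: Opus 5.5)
The first equality is the iterated-integral representation of multiple zeta values recalled in Example~\ref{mzv}, with the word written as $\omega_1\omega_0^{k_1-1}\cdots\omega_1\omega_0^{k_d-1}$. So all the work is in the second equality. The plan is to view it as a change of variables for iterated integrals under the map $\lambda$, and then to substitute the two pullback formulas computed just above:
\[
\lambda^*\omega_0 = 2\pi i\,(G(z)-16F(z))\,dz, \qquad \lambda^*\omega_1 = 32\pi i\,F(z)\,dz = 16\cdot 2\pi i\,F(z)\,dz .
\]

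\textbf{Step 1: the lifted path.} Since $\lambda$ is an isomorphism $Y_0(4)\to\mathbb{P}^1\setminus\{0,1,\infty\}$, the composite $\mathbb{H}\to Y_0(4)\to\mathbb{P}^1\setminus\{0,1,\infty\}$ is a covering map away from the elliptic points. Choose a point $z_0\in\mathbb{H}$ with $\lambda(z_0)=1/2$. Lift the open segment $\mathrm{dch}|_{(0,1)}$ through this covering to a path $\tilde\gamma$ in $\mathbb{H}$ starting at $z_0$; this is what $\lambda^*(\mathrm{dch})$ means. Because $F=q+O(q^2)$ and $G=1+O(q)$, we have $\lambda\to 0$ as $z\to i\infty$. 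Hence the end of $\tilde\gamma$ over $\lambda=0$ tends to a cusp equivalent to $i\infty$, and the end over $\lambda=1$ tends to the cusp where $G-16F$ vanishes.

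\textbf{Step 2: the truncated identity.} For $0<\varepsilon<1/2$, let $\gamma_\varepsilon=\mathrm{dch}|_{[\varepsilon,1-\varepsilon]}$ and let $\tilde\gamma_\varepsilon$ be its lift. Then $\lambda\circ\tilde\gamma_\varepsilon=\gamma_\varepsilon$. By the definition of iterated integrals via pullback along the path, together with Proposition~\ref{int}(i) (independence of parametrization),
\[
\int_{\gamma_\varepsilon}\eta_1\cdots\eta_k=\int_{\tilde\gamma_\varepsilon}\lambda^*\eta_1\cdots\lambda^*\eta_k .
\]
This holds termwise, since $(\lambda\circ\tilde\gamma_\varepsilon)^*\eta=\tilde\gamma_\varepsilon^*\lambda^*\eta$. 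Substituting the pullbacks is now a bookkeeping step. The word contains $d$ letters $\omega_1$, each contributing a factor $16\cdot 2\pi i$, and $w-d$ letters $\omega_0$, each contributing a factor $2\pi i$. Pulling these constants out by multilinearity gives the total factor $(2\pi i)^w 16^d$, in front of the iterated integral of $F\,dz$ and $(G-16F)\,dz$ in the same order.

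\textbf{Step 3: removing the truncation (main obstacle).} Let $\varepsilon\to 0$. On the left-hand side, the truncated integrals converge to $\zeta(k_1,\dots,k_d)$ whenever the multiple zeta value is convergent, i.e.\ the relevant $k$ is $\ge 2$. Since the two sides agree for every $\varepsilon$, the right-hand limit exists and equals the same value, and we define the integral along $\lambda^*(\mathrm{dch})$ as this limit. To make this concrete I would also check the convergence directly on the modular side. Near $i\infty$, the first form $F\,dz$ decays like $q$. Near the other cusp, an Atkin--Lehner transformation $\omega_4$ should make $G-16F$ rapidly decaying there, mirroring the argument of Lemma~\ref{lem2}. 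If this direct check becomes messy, the limit argument alone already suffices for the stated identity.
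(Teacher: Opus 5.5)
Your proposal is correct and follows the paper's own (implicit) argument. You substitute the pullbacks $\lambda^*\omega_0 = 2\pi i\,(G-16F)\,dz$ and $\lambda^*\omega_1 = 32\pi i\,F\,dz$ into the iterated-integral representation, and collect the constant $(2\pi i)^w 16^d$ from the $d$ letters $\omega_1$ and the $w-d$ letters $\omega_0$. Your lifting and truncation steps only make explicit the path and endpoint-convergence details the paper leaves unstated; in fact, once the lift is parametrized so that $\lambda\circ\tilde\gamma=\mathrm{dch}$, the two iterated integrals are literally the same integral over the open simplex, so no limiting argument is needed.
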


\end{document}